\newtheorem{theorem}{Theorem}[section]
\newtheorem{lemma}[theorem]{Lemma}
\newtheorem{proposition}[theorem]{Proposition}
\newtheorem{corollary}[theorem]{Corollary}
\newtheorem{definition}[theorem]{Definition}
\DeclareMathOperator{\argmin}{argmin}
\newcommand{\R}{\mathbb{R}}
\newcommand{\algoclass}{{\tt DFi}}
\newcommand{\DS}{{\tt ds}}
\newcommand{\MB}{{\tt mb}}
\newcommand{\Eds}{\mathbb{E}_{\DS}}
\newcommand{\Emb}{\mathbb{E}_{\MB}}
\newcommand{\Sd}{\mathcal{S}^{d-1}}
\newcommand{\Od}{O(d)}
\newcommand{\V}{\mathcal{V}}
\newcommand{\T}{\mathrm{T}}
\newcommand{\cI}{\mathcal{I}}
\renewcommand{\top}{\mathrm{T}}
\newcommand{\E}[1]{\operatorname{\mathbb{E}}\left[ #1 \right]}
\newcommand{\EE}[2][]{\operatorname{\mathbb{E}}_{#1}\left[ #2\right]} 
\newcommand{\boxedeq}[2]{\begin{empheq}[box={\fboxsep=6pt\fbox}]{align}\end{empheq}}
\algnewcommand{\Inputs}[1]{%
  \State \textbf{Inputs:}
  \Statex \hspace*{\algorithmicindent}\parbox[t]{.8\linewidth}{\raggedright #1}
}
\algnewcommand{\Initialize}[1]{%
  \State \textbf{Initialize:}
  \Statex \hspace*{\algorithmicindent}\parbox[t]{.8\linewidth}{\raggedright #1}
}
\algnewcommand{\Stop}[1]{%
  \State \textbf{Stopping Conditions:}
  \Statex \hspace*{\algorithmicindent}\parbox[t]{.8\linewidth}{\raggedright #1}
}
\numberwithin{equation}{section}
\title{Expected decrease for derivative-free algorithms using 
random subspaces}
\author{
Warren Hare 
\thanks{Department of Mathematics, University of British 
Columbia, Kelowna, British Columbia, Canada.  Hare's research is partially 
funded by the Natural Sciences and Engineering Research Council of Canada 
(NSERC) grant RGPIN-2023-03555. 
ORCID 0000-0002-4240-3903 (\texttt{warren.hare@ubc.ca}).}
\and  
Lindon Roberts
\thanks{School of Mathematics and Statistics, University of Sydney, Camperdown NSW 2006, Australia. ORCID 0000-0001-6438-9703 (\texttt{lindon.roberts@sydney.edu.au}).} 
\and 
Cl\'ement W. Royer
\thanks{LAMSADE, CNRS, Universit\'e Paris Dauphine-PSL, 
Place du Mar\'echal de Lattre de Tassigny, 75016 Paris, France. Royer's 
research is partially funded by Agence Nationale de la Recherche through 
program ANR-19-P3IA-0001 (PRAIRIE 3IA Institute).
ORCID 0000-0003-2452-2172
(\texttt{clement.royer@lamsade.dauphine.fr}).}
}
\begin{document}
\maketitle


\begin{abstract}
Derivative-free algorithms seek the minimum of a given function based only on 
function values queried at appropriate points. Although these methods are 
widely used in practice, their performance is known to worsen as the problem 
dimension increases. Recent advances in developing randomized derivative-free 
techniques have tackled this issue by working in low-dimensional subspaces 
that are drawn at random in an iterative fashion. The connection between the dimension 
of these random subspaces and the algorithmic guarantees has yet to be fully 
understood.

In this paper, we develop an analysis for derivative-free algorithms (both direct-search and model-based 
approaches) employing random subspaces. Our results leverage linear local approximations of smooth 
functions to obtain understanding of the expected decrease achieved per 
function evaluation. Although the quantities of interest involve 
multidimensional integrals with no closed-form expression, a relative 
comparison for different subspace dimensions suggest that low dimension is 
preferable. Numerical computation of the quantities of interest confirm the 
benefit of operating in low-dimensional subspaces.
\end{abstract}

\paragraph{AMS Subject classification:} 65K05, 90C56, 90C60.

\section{Introduction}
\label{sec:intro}

Derivative-free algorithms are designed to minimize a function using solely 
function value information. These methods are 
particularly valuable for optimizing functions arising in complex engineering 
and learning models and, as such, have been applied in a diversity of 
fields~\cite{CAudet_WHare_2017,
ARConn_KScheinberg_LNVicente_2009b,JLarson_MMenickelly_SMWild_2019}.  
However, classical derivative-free algorithms typically struggle to 
optimize functions with a large number of variables, as they must explore a 
large variable space without the guidance provided by derivatives. For these 
algorithms, the number of function evaluations that are used at each iteration 
can scale linearly with the problem dimension. As a result, the use of 
derivative-free algorithms has historically been restricted to problems having no more 
than a hundred variables.

To overcome this fundamental limitation, recent algorithmic proposals have 
relied on applying iterations in randomly chosen subspaces. For example, several
derivative-free algorithms based on \emph{direct-search methods}
have been proposed that use opposite Gaussian directions (effectively 
a one-dimensional subspace) in various settings as a way to 
compute steps using no more than two function 
evaluations~\cite{EBergou_EGorbunov_PRichtarik_2020,
JCDuchi_MIJordan_MJWainwright_AWibisono_2015,
YuNesterov_VSpokoiny_2017}. Another line of work considered directions 
uniformly distributed in the unit 
sphere~\cite{MADinizEhrhardt_JMMArtinez_MRaydan_2008,
SGratton_CWRoyer_LNVicente_ZZhang_2015}. In that setting, it was shown that 
an almost-surely convergent algorithm could be designed by using only two 
function evaluations per iteration, with the best choice (both in terms of 
gradient approximation and practical performance) being to use opposite 
directions~\cite{SGratton_CWRoyer_LNVicente_ZZhang_2015}. More recently, a 
generalized analysis showed that random subspaces of arbitrary dimension 
could be used to design globally convergent 
methods~\cite{LRoberts_CWRoyer_2023}. Similar ideas were proposed in the 
context of finite-difference estimates aiming at approximating directional 
derivatives~\cite{DKozak_CMolinari_LRosasco_LTenorio_SVilla_2023,
DKozak_SBecker_ADoostan_LTenorio_2021}.

{\em Model-based derivative-free algorithms}, that operate by maintaining a 
model of the objective function, have also been revisited using random 
subspaces. A model-based trust-region algorithm was recently proposed in 
the context of nonlinear least squares~\cite{CCartis_LRoberts_2023}, drawing on similar ideas for derivative-based algorithms~\cite{CCartis_JFowkes_ZShao_2022,ZShao_2022}. 
A randomized subspace 
trust-region method was subsequently developed for stochastic 
optimization~\cite{KJDzahini_SMWild_2022}. We also note the use of 
sketching matrices within derivative-free trust-region methods as another 
setup in which random subspaces can be employed~\cite{MMenickelly_2023}.

In the direct-search setting, empirical performance strongly suggested 
that using one-dimensional subspaces provided the best 
results~\cite{SGratton_CWRoyer_LNVicente_ZZhang_2015,LRoberts_CWRoyer_2023}. 
The conclusions were not as definitive in the model-based case, where 
quadratic models seemingly required sufficiently large subspaces to be 
built in~\cite{CCartis_LRoberts_2023} (but that implementation incorporated numerous extra heuristics), while model-based algorithms using 
linear interpolation proved efficient using very low 
dimensions~\cite{KJDzahini_SMWild_2022}. Although convergence analysis 
often applies for subspaces of any sufficiently large---but still $O(1)$---dimension, it does not 
provide a clear understanding of the connection between subspace dimension 
and practical performance, nor why extremely low-dimensional spaces (e.g.~1 or 2) are good choices in practice.

In this paper, we examine expected decrease for derivative-free 
algorithms based on random subspaces. 
To our knowledge, our approach of quantifying the expected per-iteration and per-oracle-call objective decrease is a novel framework for studying the complexity of randomized methods for nonlinear optimization.
Our approach allows us to provide information about \emph{average-case} algorithm performance, instead of the more common worst-case performance analysis typical in complexity analysis (e.g.~\cite{SGratton_CWRoyer_LNVicente_ZZhang_2015,CCartis_LRoberts_2023}).

By considering a general algorithmic 
framework, we are able to handle both direct-search and model-based 
strategies. By leveraging local linear approximations of the function to 
minimize, we express our problem in terms of linear functions, which 
facilitates the derivation of decrease guarantees in expectation. Our 
analysis shows that using low subspace dimension leads to the best possible 
objective decrease per function evaluation. Since evaluating the objective is often the 
computational bottleneck of derivative-free algorithms, such a result further 
motivates the use of randomized subspaces in these methods.

The remainder of this paper is structured as follows. The rest of this 
introductory section sets the notations and recalls some useful results about 
uniform distributions in subspaces. Section~\ref{sec:algos} provides a general 
algorithm template that covers both direct-search and model-based techniques. 
Section~\ref{sec:theoDS} is dedicated to analyzing direct-search methods 
based on random subspaces. The corresponding results for model-based methods 
are described in Section~\ref{sec:theoMB}. Section~\ref{sec:num} illustrates 
our theoretical findings with numerical experiments. Finally, we discuss 
extensions of our results in Section~\ref{sec:conc}.

\subsection{Notations and probability background}
\label{ssec:notproba}

Throughout the paper, $d$ and $p$ will always denote integers greater than or 
equal to $1$ with $p \leq d$.   
The Euclidean norm in $\R^d$ will be denoted by $\|\cdot\|$.  
The identity matrix in $\R^{d \times d}$ will be denoted by $I_d$. The unit 
sphere in $\R^d$ will be denoted by $\Sd$. The set of orthogonal $d \times d$ 
matrices will be denoted by $\Od$. For $p\le d$, the Stiefel manifold of 
$p \times d$ matrices with orthogonal columns in $\R^d$ will be denoted by 
$\V_{p,d}:=\{X\in\R^{d\times p} : X^{\top} X = I_p\}$. Note that 
$\V_{1,d}$ corresponds to the unit sphere $S^{d-1}$ while $\V_{d,d}=\Od$.

Our main results will really heavily on uniform distributions within the 
Stiefel manifold. Key results about this distribution are gathered in the next 
lemma, and we omit the proofs as they can be found in reference textbooks 
on normed vector spaces~\cite[Section 1]{VMilman_GSchectman_1986} and 
manifolds~\cite[Section 2.2]{YChikuse_2003}.

\begin{lemma} 
\label{lem:stiefel} 
	For any integers $1 \le p \le d$, the following hold.
	\begin{enumerate}[(i)]
    	\item The uniform distribution on $V_{p,d}$ is uniquely defined. 
    	\item If $X$ follows a uniform distribution on $V_{p,d}$, then so 
    	does $Q_1 X Q_2^{\top}$ for any (possibly random) $Q_1\in V_{d,d}$ and 
    	$Q_2\in V_{p,p}$ independent of $X$. 
    	\item If $Q$ follows a uniform distribution on $V_{d,d}$, then so does 
    	$Q^{\top}$. 
    	\item We may construct $X\in V_{p,d}$ uniformly distributed by 
    	$X=Q_1 X_0 Q_2^{\top}$ for fixed $X_0\in V_{p,d}$, and independent and 
    	uniformly drawn $Q_1\in V_{d,d}$ and $Q_2\in V_{p,p}$. 
	\end{enumerate}
\end{lemma}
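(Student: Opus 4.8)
The plan is to obtain all four statements from a single structural fact: $V_{p,d}$ is a homogeneous space for the compact group $\Od$ acting by left multiplication, $(Q,X)\mapsto QX$, and $\Od$ carries a unique, bi-invariant, inversion-invariant Haar probability measure $\mu$. Throughout I take \emph{uniform distribution on $V_{p,d}$} to mean a Borel probability measure on $V_{p,d}$ invariant under this left action; equivalently, a random matrix $X\in V_{p,d}$ with $PX\overset{d}{=}X$ for every deterministic $P\in\Od$.

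First I would record transitivity of the action: given a base point $X_0\in V_{p,d}$ and any $X\in V_{p,d}$, complete their columns to orthogonal matrices $[\,X_0\ \bar X_0\,],[\,X\ \bar X\,]\in\Od$; then $Q:=[\,X\ \bar X\,][\,X_0\ \bar X_0\,]^{\top}\in\Od$ satisfies $QX_0=X$. For existence in \textbf{(i)}, I would push $\mu$ forward along $Q\mapsto QX_0$: the image is a probability measure on $V_{p,d}$, and it is left-$\Od$-invariant because $\mu$ is. Uniqueness is the only part that requires genuine work, and is the main obstacle. I would prove it by the classical averaging argument for homogeneous spaces of compact groups: if $\nu$ is any left-invariant probability measure on $V_{p,d}$ and $f$ is continuous, then Fubini together with invariance of $\nu$ and of $\mu$ gives
\[
\int f\,d\nu \;=\; \int_{\Od}\!\int_{V_{p,d}} f(Qx)\,d\nu(x)\,d\mu(Q) \;=\; \int_{V_{p,d}} g(x)\,d\nu(x),
\qquad g(x):=\int_{\Od} f(Qx)\,d\mu(Q),
\]
and by transitivity and left-invariance of $\mu$ the function $g$ is constant, equal to $\int f\,d\mu^{\sharp}$ with $\mu^{\sharp}$ the pushforward above; hence $\int f\,d\nu=\int f\,d\mu^{\sharp}$ for all continuous $f$, so $\nu=\mu^{\sharp}$. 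This step rests on textbook properties of Haar measure on compact groups, which I would quote rather than reprove.

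Granting (i), the remaining items are short. \textbf{(ii):} for deterministic $Q_1\in\Od$, $Q_2\in V_{p,p}$ and $X$ uniform, fix $R\in\Od$ and write $R(Q_1XQ_2^{\top})=(RQ_1)XQ_2^{\top}$; since $RQ_1,Q_1\in\Od$ and $X$ is uniform, $(RQ_1)X\overset{d}{=}X\overset{d}{=}Q_1X$, and applying the fixed linear map $Y\mapsto YQ_2^{\top}$ preserves this equality in law, so $R(Q_1XQ_2^{\top})\overset{d}{=}Q_1XQ_2^{\top}$. Thus the law of $Q_1XQ_2^{\top}$ is left-$\Od$-invariant, hence uniform by (i); in particular $Q_1X$ and $XQ_2^{\top}$ are separately uniform. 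For random $Q_1,Q_2$ independent of $X$, I would condition on $(Q_1,Q_2)$: the conditional law is uniform and independent of $(Q_1,Q_2)$, so the unconditional law is uniform too. \textbf{(iii)} is the case $p=d$: then $V_{d,d}=\Od$ and $Q^{\top}=Q^{-1}$, and the law of $Q^{-1}$ is left-$\Od$-invariant because left multiplication turns into right multiplication under inversion and $\mu$ is right-invariant, hence it equals $\mu$ by uniqueness. \textbf{(iv):} with $X_0$ fixed and $Q_1$ uniform on $\Od$, the law of $Q_1X_0$ is the pushforward of $\mu$ along $Q\mapsto QX_0$, which is left-$\Od$-invariant and therefore uniform by (i); since $Q_2$ is independent of $Q_1$, hence of $Q_1X_0$, the random version of (ii) applied to the uniform matrix $Q_1X_0$ shows that $Q_1X_0Q_2^{\top}$ is uniform as well --- and this holds for \emph{any} distribution of $Q_2$ on $V_{p,p}$, not just the uniform one. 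In summary, the one nontrivial ingredient is the uniqueness in (i); once it is available, (ii)--(iv) are bookkeeping with pushforwards and conditioning.
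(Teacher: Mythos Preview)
Your proof plan is correct and is precisely the standard route via Haar measure on the compact group $\Od$ and the identification of $V_{p,d}$ as a homogeneous $\Od$-space; in particular your averaging argument for uniqueness in (i), the conditioning argument for random $Q_1,Q_2$ in (ii), the bi-invariance/inversion-invariance argument for (iii), and the reduction of (iv) to (i)+(ii) are all sound. The paper itself does not prove this lemma at all---it simply defers to reference textbooks (Milman--Schechtman, Chikuse)---so there is no alternative approach to compare against; your write-up is exactly the kind of argument those references contain, and your parenthetical observation that (iv) requires no distributional assumption on $Q_2$ beyond independence is a correct sharpening.
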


\section{Framework for derivative-free algorithms in random subspaces}
\label{sec:algos}

In this section, we present a general framework for a derivative-free algorithm that performs 
steps in randomly drawn subspaces. Section~\ref{ssec:genalgo} discusses our 
main framework, while Section~\ref{ssec:variants} gives two variations on the 
general method: one for direct-search methods and one for model-based methods.
Section~\ref{ssec:expdec} then defines the quantities of 
interest for analyzing the algorithms.

\subsection{General framework}
\label{ssec:genalgo}

Consider the minimization of a continuously differentiable function 
$f:\R^d \rightarrow \R$, where the derivative of $f$ cannot be used for 
algorithmic purposes. A derivative-free algorithm is an iterative procedure 
that explores the variable space by querying $f$ at finitely many points at 
every iteration in order to select the next iterate. In this paper, we are 
interested in derivative-free algorithms that produces such iterates by 
evaluating $f$ in a subspace of dimension $p \le d$ at every iteration, where 
this subspace is drawn randomly.

Algorithm~\ref{alg:DFAwRS} provides the general framework for our analysis. 
At each iteration, a random subspace is selected and one iteration of a 
given derivative-free method (\algoclass{}) is performed on that subspace. 
As our analysis focuses on expected decrease per iteration, we 
intentionally leave the stopping criterion and the step size update procedure 
undefined.

\begin{algorithm}[ht!]
\caption{Derivative-free algorithm with random subspaces}\label{alg:DFAwRS}
\begin{algorithmic}[1]
\Procedure{DFAwRS}{$f,x^0,\delta^0, p, \max_{fc}, \epsilon_{\tt stop},$
\algoclass}
    \State \% $f$: the objective function, $f:\R^d \mapsto \R$
    \State \% $x^0$: the initial point, $x^0 \in \R^d$
    \State \% $\delta^0$: the initial step size parameter, $\delta^0 > 0$
    \State \% $p$: the subspace dimension, $p \in \{1, 2, \ldots, n\}$ 
    \State \% \algoclass{}: iteration of the chosen DFO algorithm, used on 
    subspaces
    \While{stopping conditions not met}
    \State Randomly select a subspace of dimension $p$ with orthonormal 
    basis 
    $$B = \{b_1, b_2, ..., b_p\} \subseteq \R^d$$ 
    \State Define $f^k|_p : \R^p \rightarrow \R$ as 
    $f^k|_p (z) = f(x^k + \sum_{i=1}^p z_i b_i)$
    \State Create $z^*$ from the output of one iteration of \algoclass{} 
    applied to $f^k|_p$ using initial point $z^0=0$ and step size $\delta^k$
    \State Set $x^{k+1} = x^k + \sum_{i=1}^p z^*_i b_i$
    \State Select $\delta^{k+1}$ and increment $k \leftarrow k+1$
    \EndWhile
\EndProcedure
\end{algorithmic}
\end{algorithm}

In order to draw a random subspace at every iteration, we randomly generate a 
random orthonormal basis $B$ for a $p$-dimensional subspace of $\R^d$. In the 
rest of the paper, we will assume that $B$ is generated from the uniform 
distribution on the Stiefel manifold $V_{p,d}$, which amounts to taking the 
first $p$ columns of a uniformly sampled matrix from 
$\Od$~\cite[Section 2]{AEdelman_TAArias_STSmith_1998}. More precisely, given 
$Q=[q_1\ \cdots\ q_d]\in \Od$ uniformly sampled from the Haar measure, we 
construct the basis $B$ by letting $b_i=q_i$ for all $i=1,\dots,p$.

In the next section, we illustrate two variants on this method corresponding 
to the two main classes of derivative-free algorithms.

\subsection{Direct-search and model-based variants}
\label{ssec:variants}

Our first instance of \algoclass{} corresponds to a (directional) direct-search 
iteration. In their basic form, direct-search schemes do not attempt to build 
an approximate gradient, but merely explore the space along suitably chosen 
directions. In a deterministic setting, these directions usually form a 
positive spanning set, so that one of them is close to the steepest descent 
direction~\cite{TGKolda_RMLewis_VTorczon_2003}. Recent proposals in a 
probabilistic setting have replaced this requirement by random directions, 
with a particular interest for using directions belonging to a random 
subspace~\cite{SGratton_CWRoyer_LNVicente_ZZhang_2015,
LRoberts_CWRoyer_2023}. We adopt a similar approach in 
Algorithm~\ref{alg:DS} that describes our direct-search iteration.

\begin{algorithm}[ht!]
\caption{Direct-search iteration ({\tt ds})}\label{alg:DS}
\begin{algorithmic}[1]
\Procedure{\DS}{$f|_p ,z,\delta$}
    \State \% $f|_p $: the objective function, $f|_p :\R^p \mapsto \R$
    \State \% $z$: the incumbent solution, $z \in \R^p$
    \State \% $\delta$: the step size parameter, $\delta > 0$
    \State Consider the canonical basis $\{e_1, e_2, \ldots, e_p\}$ for 
    $\R^p$
    \State Return $z^* = \argmin \{ f|_p (z + \delta u) : 
    u \in \{\pm e_i\}_{i=1}^p \cup \{0\}\}$
\EndProcedure
\end{algorithmic}
\end{algorithm}

Note that we restrict ourselves to using coordinate directions in Algorithm \ref{alg:DS}. 
This is only to simplify presentation.  Indeed, applying Lemma \ref{lem:stiefel}(ii), it is clear that 
using a random orthonormal basis of $\R^p$ will produce the same expected decrease.
Note also that line 6 of Algorithm~\ref{alg:DS} states that complete 
polling is performed, that is we sample in all directions and return 
the best point that can be obtained. We will discuss how this algorithmic 
choice can be relaxed in Section~\ref{sec:theoDS}.

Our second algorithmic variant corresponds to a model-based iteration, and consists 
in building a linear interpolation model of the function. To this end, 
we leverage the notion of a simplex 
gradient~\cite[Chapter 9]{CAudet_WHare_2017}, which we restate below 
in a format tailored to our setup.

\begin{definition}
\label{de:simplexg}
	Consider the function $f|_p$ used in Algorithm~\ref{alg:DFAwRS}.
	Let $V= \begin{bmatrix} v^1 & v^2 & \ldots & v^p\end{bmatrix}$ be 
	an invertible matrix in $\R^{p \times p}$. For any $z \in \R^p$, the 
	{\em simplex gradient} of $f|_p$ at $z$ based on 
	$D$ is defined by
    \begin{equation}
    \label{eq:simplexg}
        \nabla_S f|_p (z, V) 
        = (V^\top)^{-1} 
        \begin{bmatrix}
        f|_p (z+v^1) -f|_p (z) \\
        f|_p (z+v^2) -f|_p (z) \\
        \vdots\\ 
        f|_p (z+v^p) -f|_p (z)
    	\end{bmatrix}
    	.
    \end{equation}
\end{definition}

The simplex gradient is used to construct a linear interpolation model of 
$f$, that can be used to produce a step from 
$z$~\cite[Chpt 9]{CAudet_WHare_2017}. This observation is at the heart of 
model-based derivative-free algorithms, where other, more elaborate models 
can be employed. In Algorithm~\ref{alg:MB}, we describe a trust-region 
model-based iteration based on a simplex gradient. This iteration computes 
a step that minimizes the model $u \mapsto \nabla_S f|_p(z,I_p)^\T u$ over 
a ball of radius $\delta$ centered at $z$, with $I_p$ being the identity 
matrix in $\R^{p \times p}$. In that simple case, the minimizer can be 
found explicitly, yielding formula~\eqref{eq:MBstep}.

\begin{algorithm}[ht!]
\caption{Model-based iteration ({\tt mb})}\label{alg:MB}
\begin{algorithmic}[1]
\Procedure{\MB}{$f|_p,z^0,\delta^0$}
    \State \% $f|_p$: the objective function, $f|_p:\R^p \mapsto \R$
    \State \% $z$: the incumbent solution, $z \in \R^p$
    \State \% $\delta$: the trust region radius, $\delta > 0$
    \State Evaluate $f|_p (z)$ and $f|_p (z+\delta e_i)$ ($i=1, 2, \ldots, p)$
    to construct
    \begin{equation}
    \label{eq:MBstep}
    	u = -\frac{\nabla_S f|_p (z, \delta {\tt I_p})}
    	{\|\nabla_S f|_p (z, \delta {\tt I_p})\|}
    \end{equation}
    \State Return $z^* = \argmin \{ f|_p (z), ~ f|_p (z + \delta u)\}$
\EndProcedure
\end{algorithmic}
\end{algorithm}

Similarly to Algorithm~\ref{alg:DS}, Algorithm~\ref{alg:MB} employs the coordinate directions 
in order to simplify presentation.

\subsection{Expected decrease guarantees}
\label{ssec:expdec}

Derivative-free algorithms are commonly designed so as to drive the 
step size or trust-region parameter $\delta^k$ to zero as the algorithm 
unfolds. Consequently, providing guarantees associated to the linear 
\emph{Taylor model} of the function around any given point leads to 
guarantees about decrease in function values.  We present one such result in Proposition~\ref{pr:dec}.

\begin{proposition}
\label{pr:dec}
	Suppose that $f$ is continuously differentiable with $L$-Lipschitz 
	continuous gradient. Consider the $k$th iteration of 
	Algorithm~\ref{alg:DFAwRS}, and suppose that we find a random 
	unit direction $u \in \R^d$ such that
	\begin{equation}
	\label{eq:decgrad}
		\E{\nabla f(x^k)^\T u} \le -\gamma < 0,
	\end{equation}
	where the expectation is taken over the randomness in $u$.
	Then, for sufficiently small $\delta_k$,
	\begin{equation}
	\label{eq:dec}
		\E{f(x^k + \delta^k u)-f(x_k)} \le -\frac{\gamma}{2}\delta^k,
	\end{equation}
	where the expectation is taken over the randomness in $u$.
\end{proposition}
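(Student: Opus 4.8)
The plan is to derive~\eqref{eq:dec} as a direct consequence of the descent lemma for functions with Lipschitz continuous gradient, combined with linearity of expectation and a suitable choice of threshold on $\delta^k$. First I would invoke the descent lemma: since $\nabla f$ is $L$-Lipschitz continuous, for any point $x\in\R^d$, any direction $u\in\R^d$ and any $\delta>0$ we have
\[
f(x+\delta u)\le f(x)+\delta\,\nabla f(x)^\T u+\frac{L}{2}\delta^2\|u\|^2.
\]
Applying this inequality at $x=x^k$ with the random unit direction $u$ and step size $\delta^k$, and using $\|u\|=1$, yields the pointwise bound
\[
f(x^k+\delta^k u)-f(x^k)\le \delta^k\,\nabla f(x^k)^\T u+\frac{L}{2}(\delta^k)^2 .
\]

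Next I would take the expectation over the randomness in $u$, with $x^k$ and $\delta^k$ held fixed. By linearity of expectation and the assumption~\eqref{eq:decgrad},
\[
\E{f(x^k+\delta^k u)-f(x^k)}\le \delta^k\,\E{\nabla f(x^k)^\T u}+\frac{L}{2}(\delta^k)^2\le -\gamma\,\delta^k+\frac{L}{2}(\delta^k)^2 .
\]
Finally I would make the phrase ``sufficiently small $\delta^k$'' precise: the right-hand side above is at most $-\tfrac{\gamma}{2}\delta^k$ exactly when $\tfrac{L}{2}(\delta^k)^2\le \tfrac{\gamma}{2}\delta^k$, i.e.\ when $\delta^k\le \gamma/L$. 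Under this condition,~\eqref{eq:dec} follows immediately.

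I do not expect any genuine obstacle here, as the statement is essentially a corollary of the descent lemma. The only two points warranting care are (i) emphasizing that the expectations in both~\eqref{eq:decgrad} and~\eqref{eq:dec} are taken solely over $u$, with the iterate $x^k$ (and the step size $\delta^k$) treated as fixed, so that linearity of expectation applies without subtlety; and (ii) recording the explicit threshold $\delta^k\le \gamma/L$ rather than leaving the smallness requirement unquantified, since this threshold is what the later analysis will need.
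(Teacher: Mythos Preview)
Your proof is correct and follows essentially the same approach as the paper: apply the descent lemma, take expectations, and bound the quadratic remainder. Your explicit threshold $\delta^k\le \gamma/L$ is in fact sharper than the paper's stated condition $\delta^k<1/L$, which appears to be a slip (the latter only suffices when $\gamma\ge 1$).
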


\begin{proof}
	By Taylor expansion and Lipschitz continuity, one has
	\[
		f(x^k+\delta^k u) 
		\le f(x_k) + \delta^k \nabla f(x^k)^\T u 
		+ \frac{L}{2}(\delta^k)^2 \|u\|^2 
		= f(x^k) + \delta^k \nabla f(x^k)^\T u
		+ \frac{L}{2}(\delta^k)^2.
	\]
	Taking expectations with respect to the randomness in $u$ 
	leads to
	\begin{eqnarray*}
		\E{f(x^k + \delta^k u)-f(x_k)} 
		&\le &\delta^k \E{ \nabla f(x^k)^\T u} 
		+ \frac{L}{2}(\delta^k)^2 \\
		&\le &-\gamma \delta^k + \frac{L}{2}(\delta^k)^2.
	\end{eqnarray*}
	As a result, \eqref{eq:dec} is satisfied as long as 
	$\delta^k < \frac{1}{L}$.
\end{proof}

Considering the consequences of Proposition \ref{pr:dec}, in the rest of the paper, we focus on the function
\begin{equation}
\label{eq:flin}
	f^{lin}(x) = g^\T x,
\end{equation}
where $g \in \R^n$. Analyzing such functions is significantly easier 
than the general nonlinear case. In particular, note that 
$f^{lin}(x+\delta d)-f^{lin}(x) = \delta g^\T x$ for any 
pair of vectors. As a result, the function variation scales linearly 
with $\|g\|$ and $\delta$. In addition, upon applying 
Algorithm~\ref{alg:MB}, note that any simplex gradient
(using a well-poised sample set) will always be equal to the actual gradient 
$g$, regardless of the value of 
$\delta$~\cite[Exer 9.4]{CAudet_WHare_2017}. Therefore, we also 
assume without loss of generality that $\delta=\|g\|=1$. 

We are interested in the expected decrease that one can achieve over 
one iteration of a derivative-free algorithm regardless of the value 
of $g$. This leads us to the following definition.

\begin{definition}
\label{def:Epn}
	Consider applying one iteration Algorithm~\ref{alg:DFAwRS} using either Algorithm~\ref{alg:DS} or 
	Algorithm~\ref{alg:MB}, denoted by \algoclass{} $\in \{{\tt ds}, {\tt mb} \}$, to a function 
	$f^{lin}|_p$ obtained from $f^{lin}$ defined in~\eqref{eq:flin} 
	with a vector $g$ uniformly distributed on the unit sphere $\Sd$,
	using $\delta=1$ and $p \le n$.
	We define the \emph{expected decrease} 
	$\mathbb{E}_{\algoclass{}}[p,d]$ as
	\begin{equation}
	\label{eq:Epn}
    	 \mathbb{E}_{\algoclass{}}[p,d] := \E{f^{lin}(x^{k})-f^{lin}(x^{k+1})},
	\end{equation}
	where the expected value is taken over $g$ and $B$.
\end{definition}

Our key results, presented in Sections~\ref{sec:theoDS} and~\ref{sec:theoMB}, 
aim at providing formulae for the quantity~\eqref{eq:Epn}. In both cases, we 
will see that $B$ does not influence the value of the expected 
decrease.

\section{Analysis in the direct-search setting}
\label{sec:theoDS}

In this section, we examine the expected decrease for an iteration 
described by Algorithm~\ref{alg:DS}. Our main result will be obtained in 
Section~\ref{ssec:edDS} using bounds on multidimensional integrals, and we 
will discuss consequences in Section~\ref{ssec:DSperfun} in terms of relative 
decrease per function evaluation.

\subsection{Expected decrease formula}
\label{ssec:edDS}

As a preliminary result, we show that the expected decrease produced by 
Algorithm~\ref{alg:DS} is independent of the random subspace basis $B$.

\begin{proposition} 
\label{prop:EequalsDS}
	Consider the linear function $f^{lin}$ with $g \sim \Sd$, and suppose 
	that Algorithm~\ref{alg:DFAwRS} is applied using Algorithm~\ref{alg:DS} 
	as \algoclass{} (which we denote by \algoclass{}=\DS). Then, for any 
	$k$, the expected decrease satisfies 
	\begin{equation}
	\label{eq:EequalsDS}
		\Eds[p,d] = \EE[\tilde{g} \sim \Sd]{\max_{i=1,\dots,p} |\tilde{g}_i|}.
	\end{equation}		
\end{proposition}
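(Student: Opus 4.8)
The plan is to unfold the definitions in Algorithm~\ref{alg:DS} applied to the linear function $f^{lin}|_p$ and exploit the rotational invariance of the uniform distribution on $\Sd$ provided by Lemma~\ref{lem:stiefel}.

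First I would write out explicitly what one iteration of Algorithm~\ref{alg:DS} does on $f^{lin}|_p$. Starting from $z^0 = 0$ with step size $\delta = 1$, we have $f^{lin}|_p(z) = f^{lin}(x^k + \sum_{i=1}^p z_i b_i) = g^\T x^k + \sum_{i=1}^p z_i (g^\T b_i)$. Writing $\hat g := B^\T g \in \R^p$ (the projection of $g$ onto the subspace in coordinates), this becomes $f^{lin}|_p(z) = g^\T x^k + \hat g^\T z$. The direct-search step evaluates $u \in \{\pm e_i\}_{i=1}^p \cup \{0\}$ and returns $z^*$ minimizing $f^{lin}|_p(z^0 + \delta u) = g^\T x^k + \delta\, \hat g^\T u$; since $\delta = 1$ this is $g^\T x^k + \hat g_i$ (for $u = e_i$), $g^\T x^k - \hat g_i$ (for $u = -e_i$), or $g^\T x^k$ (for $u = 0$). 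The minimum over these options is $g^\T x^k - \max_{i=1,\dots,p} |\hat g_i|$ (the $u=0$ option never strictly helps since $\max_i |\hat g_i| \ge 0$). Hence $f^{lin}(x^k) - f^{lin}(x^{k+1}) = \max_{i=1,\dots,p} |\hat g_i| = \max_{i=1,\dots,p} |(B^\T g)_i|$.

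Next I would take expectations over $g \sim \Sd$ and over the random basis $B$. The key observation is a distributional identity: for $B$ drawn uniformly from $V_{p,d}$ and independent $g \sim \Sd$, the vector $B^\T g \in \R^p$ has the same distribution as $(\tilde g_1, \dots, \tilde g_p)$ where $\tilde g \sim \Sd$ — that is, the first $p$ coordinates of a uniform point on the sphere. This follows because for any fixed $B_0 \in V_{p,d}$ (say the first $p$ coordinate vectors), $B_0^\T g = (g_1,\dots,g_p)$; and by Lemma~\ref{lem:stiefel}(iv) we can write $B = Q_1 B_0 Q_2^\T$ with $Q_1 \in V_{d,d}$, $Q_2 \in V_{p,p}$ uniform and independent, so $B^\T g = Q_2 B_0^\T Q_1^\T g$. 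Since $g \sim \Sd$ is rotationally invariant, $Q_1^\T g$ has the same distribution as $g$ and is independent of $Q_2$; then $B_0^\T(Q_1^\T g) = ((Q_1^\T g)_1, \dots, (Q_1^\T g)_p)$ is distributed as the first $p$ coordinates of a uniform sphere point. Finally, multiplying by the orthogonal matrix $Q_2 \in O(p)$ leaves the Euclidean norm — and hence any quantity depending only on the unordered set of absolute values appropriately — but more carefully, we need that $\max_i |(Q_2 w)_i|$ has the same expectation as $\max_i |w_i|$; this is not true for a fixed $Q_2$, so I would instead just drop $Q_2$ from the start by noting that in Algorithm~\ref{alg:DS} we are free (by the remark after Algorithm~\ref{alg:DS}, i.e.\ Lemma~\ref{lem:stiefel}(ii)) to use any orthonormal basis of the subspace, equivalently the expected decrease does not change if we replace $B$ by $B Q_2^\T$; so without loss of generality $B = Q_1 B_0$ and $B^\T g = B_0^\T Q_1^\T g$ is exactly the first $p$ coordinates of a uniform sphere point. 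Taking expectations then yields $\Eds[p,d] = \EE[\tilde g \sim \Sd]{\max_{i=1,\dots,p}|\tilde g_i|}$, which is the claim; in particular $B$'s distribution washes out entirely.

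The main obstacle is the bookkeeping around the right-multiplication by $Q_2 \in V_{p,p}$: one must be careful that $\max_i |\cdot|$ is not $O(p)$-invariant pointwise, so the cleanest route is to invoke the earlier remark (Lemma~\ref{lem:stiefel}(ii)) that the direct-search expected decrease is unchanged under an orthogonal change of basis within the subspace, thereby eliminating $Q_2$ before taking the max. Everything else is a routine unpacking of the algorithm's definition and the rotational invariance of the uniform sphere measure; I would also briefly note the harmless $u = 0$ case contributes nothing since the max of absolute values is nonnegative.
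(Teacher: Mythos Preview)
Your proposal is correct and follows essentially the same approach as the paper: compute the per-iteration decrease as $\max_i |(B^\T g)_i| = \|B^\T g\|_\infty$, then use rotational invariance of $g$ to reduce to the first $p$ coordinates of a uniform sphere point. The only difference is that you take a small detour through the full decomposition $B = Q_1 B_0 Q_2^\T$ of Lemma~\ref{lem:stiefel}(iv) and then have to argue away $Q_2$, whereas the paper simply writes $B = Q I_{d,p}$ with $Q \sim \Od$ (first $p$ columns of a Haar matrix, which is already uniform on $V_{p,d}$) and never introduces $Q_2$ at all; your ``drop $Q_2$'' step is really just reverting to this simpler construction, and is justified not so much by the remark after Algorithm~\ref{alg:DS} as by the fact that $Q_1 B_0$ is itself uniform on $V_{p,d}$.
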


\begin{proof} 
We first note that $x^k = x^{k+1}$ only when $B$ is orthogonal to $g$, 
and this occurs with probability $0$. Therefore, without loss of generality 
we assume  that $x^k \neq x^{k+1}$ so that $f(x^k)-f(x^{k+1})>0$. In that 
case, letting $B=[b_1\ \cdots\ b_p]$, we have
\begin{eqnarray*}
	f(x^k)-f(x^{k+1}) 
	&= &\max_{i=1,\dots,p} g^{\top} x^k - g^{\top} (x^k \pm b_i) \\
	&= &\max_{i=1,\dots,p} |g^\top b_i| \\
	&= &\|B^\T g\|_{\infty}.
\end{eqnarray*}
As a result,
\begin{equation*}
	\Eds[p,d] 
	= 
	\EE[\substack{B \sim \V_{p,d} \\ g \sim \V_{1,d}}]{\|B^\T g\|_{\infty}}.
\end{equation*}
Let $I_{d,p} := [e_1, \ldots, e_p] \in \V_{p,d}$ be the matrix containing the 
first $p$ coordinate directions in $\R^d$. By Lemma~\ref{lem:stiefel}(iv), 
we have $B=Q I_{d,p}$ for some $Q \sim \V_{d,d}$. Moreover, by 
Lemma~\ref{lem:stiefel}(ii) and (iii), the random vector $Q^\T g$ follows the 
same distribution than $g$, i.e. uniform distribution in $\V_{1,d}$. 
Therefore, we obtain
\begin{eqnarray*}
	\Eds[p,d] 
	&= &\EE[\substack{B \sim \V_{p,d} \\ g \sim \V_{1,d}}]{\|B^\T g\|_{\infty}} \\
	&= &\EE[\substack{Q \sim \V_{d,d} \\ g \sim \V_{1,d}}]
	{\|I_{d,p}^\T Q^\T g\|_{\infty}} \\
	&= &\EE[\tilde{g} \sim \V_{1,d}]{\|I_{d,p}^\T \tilde{g}\|_{\infty}} \\
	&= &\EE[\tilde{g} \sim \V_{1,d}]{\max_{i=1,\dots,p} |\tilde{g}_i|},
\end{eqnarray*}
proving~\eqref{eq:EequalsDS}.
\end{proof}

We will now obtain a mathematical expression for the 
expectation~\eqref{eq:EequalsDS}. When $d=1$, we necessarily have $p=1$ and 
\begin{equation*}
    \Eds[1,1] = 1.
\end{equation*}
Our main result will thus focus on the case $d>1$. In general, the expected 
decrease formula is considerably more intricate, as it involves multiple Gamma 
functions as well as the solution to a complex trigonometry integral. 

\begin{theorem}
\label{thm:expdecDS}
	Under the assumptions of Proposition~\ref{prop:EequalsDS}, suppose 
	further that $d>1$. Then, the expected decrease is given by
	\begin{equation}
	\label{eq:expdecDS}
    	\Eds[p,d] = \frac{p}{2}\frac{2^p}{(\sqrt{\pi})^{p}} 
    	\frac{\Gamma(d/2)\Gamma(p/2+1/2)}{\Gamma(d/2+1/2)} \cI(p), 
    \end{equation}
    where $\cI(p)$ is given by $\cI(1) := 1$ and
    \begin{equation}
    \label{eq:IpexpdecDS}
    	\cI(p) := \int_{R(p)}\left[ \prod_{i=1}^{p-1} \sin^i(\varphi_i) \right] 
    	d\varphi_{p-1} \cdots d\varphi_{1}, \qquad \text{if $p>1$,} 
    \end{equation}
    with the integration region $R(p)$ is $\{\varphi_1 \in [\pi/4, \pi/2]\}$ if 
    $p=1$ and 
	\begin{equation}
	\label{eq:RpexpdecDS}
		\left\{ (\varphi_1,\dots,\varphi_{p-1}) \in 
		[\pi/4,\pi/2] \times \prod_{i=2}^{p-1} \left[
		\arctan\left(\prod_{j=1}^{i-1}\csc\varphi_j\right), 
		\frac{\pi}{2}\right] 
		\right\} 
	\end{equation}
	otherwise.
\end{theorem}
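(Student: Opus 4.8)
The plan is to split the quantity into a one-dimensional radial contribution, which will produce the Gamma factors, and a $p$-dimensional spherical integral, which will produce $\cI(p)$. By Proposition~\ref{prop:EequalsDS} it is enough to evaluate $\Eds[p,d]=\EE[\tilde g\sim\Sd]{\max_{i=1,\dots,p}|\tilde g_i|}$. Writing $\tilde g=G/\|G\|$ with $G\sim\mathcal N(0,I_d)$ and decomposing $G=(G',G'')$ with $G'\in\R^p$, one has $(\tilde g_1,\dots,\tilde g_p)=\rho\,\zeta$ with $\rho=\|G'\|/\|G\|$ and $\zeta=G'/\|G'\|$ \emph{independent}, $\zeta$ uniform on $S^{p-1}$, and $\rho^2\sim\mathrm{Beta}(p/2,(d-p)/2)$. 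Since $\max_{i\le p}|\tilde g_i|=\rho\,\|\zeta\|_\infty$, independence gives $\Eds[p,d]=\E{\rho}\cdot\EE[\zeta\sim S^{p-1}]{\|\zeta\|_\infty}$, and the radial factor is a fractional Beta moment:
\[
  \E{\rho}=\E{(\rho^2)^{1/2}}=\frac{\Gamma(p/2+1/2)\,\Gamma(d/2)}{\Gamma(p/2)\,\Gamma(d/2+1/2)},
\]
which already supplies all the Gamma functions in~\eqref{eq:expdecDS}. (If $p=d$ this step is vacuous, $\rho\equiv1$, and one proceeds directly with the spherical integral.)

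For the spherical factor, set $C_p:=\EE[\zeta\sim S^{p-1}]{\|\zeta\|_\infty}=\frac{1}{\omega_{p-1}}\int_{S^{p-1}}\|\zeta\|_\infty\,d\sigma(\zeta)$, where $\omega_{p-1}=2\pi^{p/2}/\Gamma(p/2)$ is the surface area of $S^{p-1}$. Almost every $\zeta\in S^{p-1}$ has a unique coordinate of largest modulus; splitting $S^{p-1}$ into the $2^p$ sign orthants and then into the $p$ subsets according to which coordinate dominates, symmetry yields $\int_{S^{p-1}}\|\zeta\|_\infty\,d\sigma = p\,2^p\int_{A}\zeta_1\,d\sigma(\zeta)$ with $A=\{\zeta\in S^{p-1}:0\le\zeta_i\le\zeta_1,\ i=2,\dots,p\}$. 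I would then introduce hyperspherical coordinates $\varphi_1,\dots,\varphi_{p-1}$ adapted to the distinguished coordinate $\zeta_1$ by
\[
  \zeta_1=\prod_{j=1}^{p-1}\sin\varphi_j,\qquad
  \zeta_k=\cos\varphi_{k-1}\prod_{j=k}^{p-1}\sin\varphi_j\quad(k=2,\dots,p),
\]
whose Jacobian gives $d\sigma=\prod_{i=1}^{p-1}\sin^{i-1}\varphi_i\,d\varphi_1\cdots d\varphi_{p-1}$, hence $\zeta_1\,d\sigma=\prod_{i=1}^{p-1}\sin^{i}\varphi_i\,d\varphi_1\cdots d\varphi_{p-1}$, which is exactly the integrand of $\cI(p)$ in~\eqref{eq:IpexpdecDS}.

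It then remains to check that $A$ is carried onto $R(p)$. The inequalities $\zeta_k\ge0$ force $\cos\varphi_{k-1}\ge0$, i.e.\ $\varphi_i\le\pi/2$ for every $i$, giving all the upper endpoints in~\eqref{eq:RpexpdecDS}. For $\zeta_k\le\zeta_1$, dividing by $\prod_{j=k}^{p-1}\sin\varphi_j$ and then by $\sin\varphi_{k-1}$ yields $\cot\varphi_{k-1}\le\prod_{j=1}^{k-2}\sin\varphi_j$, equivalently $\varphi_{k-1}\ge\arctan\!\big(\prod_{j=1}^{k-2}\csc\varphi_j\big)$; relabelling $i=k-1$, the empty product ($i=1$) gives the lower endpoint $\pi/4$ and the rest give precisely the nested lower endpoints of $R(p)$. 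Therefore $\int_A\zeta_1\,d\sigma=\cI(p)$, so $C_p=\frac{p\,2^p}{\omega_{p-1}}\,\cI(p)=\frac p2\frac{2^p}{(\sqrt\pi)^p}\,\Gamma(p/2)\,\cI(p)$, and multiplying by $\E{\rho}$ produces~\eqref{eq:expdecDS}; with $\cI(1)=1$ this also reproduces $C_1=1$, the correct value on $S^0=\{\pm1\}$.

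The main obstacle is this last step: one must verify carefully that the chosen hyperspherical coordinates parametrize $S^{p-1}$ up to a null set with exactly the Jacobian claimed above, and that translating the defining inequalities of $A$ into angle constraints produces \emph{exactly} the region $R(p)$---with no missing or redundant constraints, and with the angle ranges and boundary null sets handled correctly. The degenerate endpoint $p=d$, where the Beta decomposition collapses, is treated separately. Everything else reduces to standard Beta-function identities and elementary trigonometry.
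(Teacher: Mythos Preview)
Your proof is correct and takes a genuinely different route from the paper's. The paper works entirely on $\Sd$: it introduces hyperspherical coordinates in $\R^d$, restricts to the region $\{\tilde g_1\ge\tilde g_i\ge 0,\ i\le p\}$ (picking up a symmetry factor $p\,2^d$), and then observes that the first $d-p$ angular integrals separate cleanly over $[0,\pi/2]$. Those $d-p$ one-dimensional integrals telescope via $\int_0^{\pi/2}\sin^{d-i}\theta\,d\theta=\tfrac{\sqrt\pi}{2}\,\Gamma((d-i+1)/2)/\Gamma((d-i)/2+1)$ to yield the Gamma factors, and the remaining $p-1$ coupled integrals are relabelled to produce $\cI(p)$.

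You instead factor the problem probabilistically before any integration: using the Gaussian representation $\tilde g=G/\|G\|$, the first $p$ coordinates become $\rho\,\zeta$ with $\zeta$ uniform on $S^{p-1}$ and $\rho^2\sim\mathrm{Beta}(p/2,(d-p)/2)$ \emph{independent}. This immediately gives $\Eds[p,d]=\E{\rho}\cdot\EE[\zeta]{\|\zeta\|_\infty}$, and the Gamma factors drop out of a single fractional Beta moment rather than a telescoping product. Hyperspherical coordinates are then only needed on $S^{p-1}$, and your reversed-index chart and region check match the paper's computation after relabelling. Your approach makes the $p$--$d$ separability of Corollary~\ref{cor:p12DS} structurally transparent from the outset and avoids working in ambient dimension $d$; the paper's approach is more self-contained (no Beta distribution or Gaussian facts) but pays for this with a longer calculus computation. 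Both arrive at the same $\cI(p)$ via the same angle constraints, so the ``obstacle'' you flag is exactly the verification the paper also carries out.
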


\begin{proof} 
By Proposition~\ref{prop:EequalsDS}, we seek to evaluate the 
expectation~\eqref{eq:EequalsDS}, i.e.,
\[
	\Eds[p,d] = \EE[\tilde{g} \sim \Sd]{\max_{i=1,\dots,p} |\tilde{g}_i|}.
\]
To this end, it suffices to evaluate the integral over the region 
\[
	R(p,d) := \left\{\, \tilde{g} \in \Sd
	\ \middle|\ 
	\tilde{g}_1 \ge \tilde{g}_i \ge 0 
	\quad \forall i=1,\dots,p \,\right\},
\]
i.e., vectors in the nonnegative orthant for which the first coordinate is 
the largest. By symmetry, one can construct $p 2^d$ similar regions with 
the same integral value by selecting a maximal absolute value coordinate 
and an orthant. Thus, integrating over $R(p,d)$ gives $1/(p2^d)$ of the 
total integral. Moreover, for any $ \tilde{g} \in R(p,d)$, we get the 
simplification
\[
    \max\{|\tilde{g}_1|,\ldots,|\tilde{g}_p|\} = \tilde{g}_1, 
\]
and therefore~\eqref{eq:EequalsDS} can be rewritten as
\begin{equation}
\label{eq:edDSRpd}
    \Eds[p,d] = \frac{p 2^d}{|\Sd|} \int_{R(p,d)} \tilde{g}_1 \: 
    dS(\tilde{g}), 
\end{equation}
where $dS$ is the surface element for $\Sd$ and $|\Sd|$ is the 
volume of the unit sphere in $\R^d$.

To evaluate \eqref{eq:edDSRpd}, we use hyperspherical coordinates 
$(\varphi_1,\ldots,\varphi_{d-1})$ for $\Sd$:
\begin{eqnarray*}
	x_d &= &\cos(\varphi_1), \\
	x_{d-1} &= &\sin(\varphi_1)\cos(\varphi_2), \\	
	&\vdots & \\
	x_2 &= &\sin(\varphi_1) \cdots \sin(\varphi_{d-2})\cos(\varphi_{d-1}), \\
	x_1 &= &\sin(\varphi_1) \cdots \sin(\varphi_{d-2})\sin(\varphi_{d-1}),
\end{eqnarray*}
with surface element
\begin{equation*}
	dS = \sin^{d-2}(\varphi_1) \sin^{d-3}(\varphi_2) 
	\cdots 
	\sin(\varphi_{d-2}) d\varphi_1 d\varphi_2 \cdots d\varphi_{d-1}.
\end{equation*}
Note that this choice is a reverse of the traditional ordering of the axes, 
that will result in a simpler proof.

With these coordinates, the constraints defining the region 
$R(p,d)\subset \Sd$ translate into the following constraints on 
$(\varphi_1,\ldots,\varphi_{d-1})$:
\begin{itemize}
    \item $\tilde{g}_i \geq 0$ yields $\varphi_i \in [0,\pi/2]$ 
    for all $i=1,\ldots,d-1$;
    \item $\tilde{g}_1 \geq \tilde{g}_2$ yields 
    $\sin(\varphi_{d-1}) \geq \cos(\varphi_{d-1})$, which simplifies to 
    $\varphi_{d-1} \geq \pi/4;$
    \item $\tilde{g}_1 \geq \tilde{g}_3$ yields 
    $\sin(\varphi_{d-2})\sin(\varphi_{d-1}) \geq \cos(\varphi_{d-2})$, 
    which simplifies to 
    \[
        \varphi_{d-2} \geq \arctan(\csc(\varphi_{d-1})).
    \]
\end{itemize}
By continuing the process, we obtain the following description of 
$R(p,d)$ when $p=1$:
\[
   R(p,d) = \left\{ \varphi_i \in [0,\pi/2] \qquad \forall i=1,\ldots,d-1
   \right\}.
\]
When $p\geq 2$, then $R(p,d)$ is defined via the constraints
\begin{subequations}
\begin{align}
    \varphi_{d-1} 
    &\in [\pi/4, \pi/2], \label{eq_R1} \\
    \varphi_{d-i} 
    &\in \left[\arctan\left(\prod_{j=1}^{i-1} \csc(\varphi_{d-j})\right), 
    \frac{\pi}{2}\right], 
    & i &= 2,\ldots,p-1, \label{eq_R2} \\
    \varphi_i &\in [0,\pi/2], 
    & i &= 1,\ldots,d-p. \label{eq_R3}
\end{align}
\end{subequations}
Thus, returning to equation~\eqref{eq:edDSRpd}, we find that
\begin{eqnarray*}
	\Eds[p,d] 
	&= &\frac{p 2^d}{|\Sd|} \int_{R(p,d)} \tilde{g}_1 \: dS(\tilde{g}), \\
    &= &\frac{p 2^d}{|\Sd|} \int_{R(p,d)} \left(
    \prod_{i=1}^{d-1} \sin(\varphi_i)\right) 
    \left(\prod_{i=1}^{d-2} \sin^{d-i-1}(\varphi_i)\right) 
    d\varphi_1 \cdots d\varphi_{n-1}, \\
    &= &\frac{p 2^d}{|\Sd|} \int_{R(p,d)} 
    \left(\prod_{i=1}^{d-1} \sin^{d-i}(\varphi_i)\right) 
    d\varphi_1 \cdots d\varphi_{d-1}.
\end{eqnarray*}

When $p=1$, the integral is fully separable, and we obtain
\begin{equation}
\label{eq:edDS1}
	\Eds[1,d] = \frac{2^d}{|\Sd|} \prod_{i=1}^{d-1} \left(
	\int_{0}^{\pi/2} \sin^{d-i}(\theta) d\theta\right). 
\end{equation}
When $p>1$, we can factor out the integration with respect 
to $\varphi_1,\ldots,\varphi_{d-p}\in[0,\pi/2]$, yielding
\begin{equation}
\begin{array}{rl}
    \Eds[p,d] 
    &= \displaystyle \frac{p 2^d}{|\Sd|} 
    \left[\prod_{i=1}^{d-p} \int_{0}^{\pi/2} \sin^{d-i}(\theta) d\theta\right] 
    \cdot \\
    & \displaystyle ~\qquad \int_{\hat{R}(p,d)} \left(
    \prod_{i=d-p+1}^{d-1} \sin^{d-i}(\varphi_i)\right) 
    d\varphi_{d-p+1} \cdots d\varphi_{d-1}, 
    \label{eq:edDSp}
\end{array}
\end{equation}
where the reduced integration region $\hat{R}(p,d)$ is parameterized 
by inclusions~\eqref{eq_R1} and~\eqref{eq_R2} only. 
For simplicity, we now relabel the variables $\varphi_{d-i} \mapsto \varphi_i$ 
in the reduced integral over $\hat{R}(p,d)$ so as to get
\begin{equation}
\begin{array}{rl}
    &\displaystyle \int_{\hat{R}(p,d)} \left(
    \prod_{i=d-p+1}^{d-1} \sin^{d-i}(\varphi_i)\right) 
    d\varphi_{d-p+1} \cdots d\varphi_{d-1}  \\
    &= \displaystyle  \int_{R(p)} \left(
    \prod_{i=1}^{p-1} \sin^{i}(\varphi_i)\right) 
    d\varphi_{p-1} \cdots d\varphi_1, 
    \label{eq:edDSint}
\end{array}
\end{equation}
where the integration region $R(p)$ is defined by $\varphi_1 \in [\pi/4,\pi/2]$ 
and~\eqref{eq:RpexpdecDS}.
Combining \eqref{eq:edDS1} for $p=1$ with \eqref{eq:edDSp} and 
\eqref{eq:edDSint} for $p>1$, we obtain overall that
\begin{equation}
\label{eq:edDSaux}
    \Eds[p,d] = \frac{p 2^d}{|\Sd|} \left[
    \prod_{i=1}^{n-p} \int_{0}^{\pi/2} \sin^{d-i}(\theta) d\theta\right] 
    \cI(p), 
\end{equation}
where $\cI(p)$ is defined in~\eqref{eq:IpexpdecDS}.

Finally, we can simplify~\eqref{eq:edDSaux} using the identity
\begin{equation}
\label{eq:sinusint}
	\int_{0}^{\pi/2} \sin^{d-i} \theta d\theta 
	= \frac{\sqrt{\pi}}{2} \frac{\Gamma((d-i)/2+1/2)}{\Gamma((d-i)/2+1)}, 
\end{equation}
for any $i=1,\dots,d-p$. We then obtain
\begin{eqnarray}
    \prod_{i=1}^{d-p} \int_{0}^{\pi/2} \sin^{d-i}(\theta) d\theta 
    &= &\frac{\pi^{(d-p)/2}}{2^{(d-p)}} \frac{\Gamma(d/2)}{\Gamma(d/2+1/2)} 
    \frac{\Gamma(d/2-1/2)}{\Gamma(d/2)} 
    \cdots \frac{\Gamma(p/2+1/2)}{\Gamma(p/2+1)}, \nonumber \\
    &= &\frac{\pi^{(d-p)/2}}{2^{(d-p)}}
    \frac{\Gamma(p/2+1/2)}{\Gamma(d/2+1/2)}. 
    \label{eq:prodsinint}
\end{eqnarray}
Finally, applying \eqref{eq:prodsinint} and 
$|\Sd| = \tfrac{2\pi^{d/2}}{\Gamma(d/2)}$ to~\eqref{eq:edDSaux}, we arrive at
\[
    \Eds[p,d] = \frac{p}{2} \frac{ 2^{p}}{(\sqrt{\pi})^{p}} 
    \frac{\Gamma(d/2)\Gamma(p/2+1/2)}{\Gamma(d/2+1/2)} \cI(p),
\]
which is the desired result.
\end{proof}

We observe that the expression~\eqref{eq:expdecDS} is separable in $p$ and 
$d$. This property allows for simplified expressions for certain values of 
$p$, and also results in simplifications while comparing two pairs of 
values for $(p,d)$ as only one of the two dimension varies. We summarize 
these observations in the corollary below.

\begin{corollary}\label{cor:p12DS}
	Let $d_1,d_2,p_1,p_2$ be integers greater than or equal to $1$ such that 
	$\max\{p_1,p_2\} \le \max\{d_1,d_2\}$. Then, the 
	following properties hold:
	\begin{enumerate}[(i)]
		\item $\Eds[1,d_1] 
		= \frac{1}{\sqrt{\pi}}\frac{\Gamma(d_1/2)}{\Gamma(d_1/2+1/2)}$;
		\item if $d_1>2$, then $\Eds[2,d_1] 
		= \frac{\sqrt{2}}{\sqrt{\pi}} \frac{\Gamma(d_1/2)}{\Gamma(d_1/2+1/2)}$;
    	\item $\frac{\Eds[p_1,d_1]}{\Eds[p_2,d_1]} 
    	= \frac{\Eds[p_1,d_2]}{\Eds[p_2, d_2]}$;
    	\item $\frac{\Eds[p_1,d_1]}{\Eds[p_1,d_2]} 
    	= \frac{\Eds[p_2,d_1]}{\Eds[p_2,d_2]}$.
    \end{enumerate}
\end{corollary}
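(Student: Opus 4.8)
The plan is to derive all four items directly from the closed-form expression~\eqref{eq:expdecDS} of Theorem~\ref{thm:expdecDS}. The crucial structural observation is that this formula is \emph{separable} in $p$ and $d$: writing
\[
  \Eds[p,d] = A(p)\,B(d), \qquad A(p) := \frac{p}{2}\frac{2^p}{(\sqrt{\pi})^{p}}\,\Gamma(p/2+1/2)\,\cI(p), \qquad B(d) := \frac{\Gamma(d/2)}{\Gamma(d/2+1/2)},
\]
the dependence on the subspace dimension $p$ and on the ambient dimension $d$ factorizes cleanly. Since $\Gamma$ is strictly positive on $(0,\infty)$, both $A(p)$ and $B(d)$ are nonzero: indeed $\cI(1)=1>0$, and for $p>1$ the integrand in~\eqref{eq:IpexpdecDS} is positive on the region~\eqref{eq:RpexpdecDS}, so $\cI(p)>0$. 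Hence every ratio appearing in the statement is well-defined.

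For part~(i) I would substitute $p=1$ into~\eqref{eq:expdecDS}: then $\cI(1)=1$ and $\Gamma(1/2+1/2)=\Gamma(1)=1$, so $A(1)=\tfrac12\cdot\tfrac{2}{\sqrt\pi}\cdot1\cdot1=\tfrac{1}{\sqrt\pi}$ and $\Eds[1,d_1]=\tfrac{1}{\sqrt\pi}\,\tfrac{\Gamma(d_1/2)}{\Gamma(d_1/2+1/2)}$. For part~(ii) I would first evaluate $\cI(2)$ from~\eqref{eq:IpexpdecDS}: the region $R(2)$ reduces to $\varphi_1\in[\pi/4,\pi/2]$ (the product over $i=2,\dots,p-1$ being empty) and the integrand is $\sin\varphi_1$, so $\cI(2)=\int_{\pi/4}^{\pi/2}\sin\varphi_1\,d\varphi_1=\tfrac{1}{\sqrt 2}$. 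Then, using $\Gamma(3/2)=\tfrac{\sqrt\pi}{2}$, substituting $p=2$ into~\eqref{eq:expdecDS} and simplifying gives $A(2)=\tfrac{\sqrt2}{\sqrt\pi}$, hence $\Eds[2,d_1]=\tfrac{\sqrt2}{\sqrt\pi}\,\tfrac{\Gamma(d_1/2)}{\Gamma(d_1/2+1/2)}$; here the stated hypothesis $d_1>2$ ensures $p=2\le d_1$.

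For parts~(iii) and~(iv) I would simply invoke the factorization $\Eds[p,d]=A(p)B(d)$. For~(iii), $\tfrac{\Eds[p_1,d_1]}{\Eds[p_2,d_1]}=\tfrac{A(p_1)B(d_1)}{A(p_2)B(d_1)}=\tfrac{A(p_1)}{A(p_2)}$, which does not depend on the common ambient dimension, so the same identity with $d_2$ in place of $d_1$ yields the same value $\tfrac{A(p_1)}{A(p_2)}$. Symmetrically, for~(iv), $\tfrac{\Eds[p_1,d_1]}{\Eds[p_1,d_2]}=\tfrac{B(d_1)}{B(d_2)}$ is independent of $p_1$, and replacing $p_1$ by $p_2$ gives the same ratio.

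I expect no genuine obstacle here: the content is entirely a matter of substituting into the already-proved formula~\eqref{eq:expdecDS} and cancelling common factors. The only computations requiring any care are the elementary evaluation of $\cI(2)$ and the bookkeeping of the Gamma values $\Gamma(1)=1$ and $\Gamma(3/2)=\tfrac{\sqrt\pi}{2}$ in part~(ii); for parts~(iii)--(iv) one should additionally record the positivity of $\Gamma$ on $(0,\infty)$ so that no denominator vanishes.
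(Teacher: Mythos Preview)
Your proposal is correct and follows essentially the same approach as the paper: you exploit the separable structure $\Eds[p,d]=A(p)B(d)$ of~\eqref{eq:expdecDS}, evaluate $\cI(1)=1$ and $\cI(2)=1/\sqrt{2}$ together with $\Gamma(1)=1$ and $\Gamma(3/2)=\sqrt{\pi}/2$ for parts~(i)--(ii), and cancel the common factor for parts~(iii)--(iv). The only addition is your explicit remark on positivity to justify the ratios, which the paper leaves implicit.
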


\begin{proof}
	The proofs of (i) and (ii) follow directly from~\eqref{eq:expdecDS} by using 
	$\cI(1)=1$, $\Gamma(1) = 1$, 
	$\cI(2) = \int_{\pi/4}^{\pi/2} \sin(\varphi_1) d\varphi_1 = 1/\sqrt{2}$, and 
	$\Gamma(3/2) = \frac{\sqrt{\pi}}{2}$.
	
	The proofs of (iii) and (iv) exploit the separability of the expression 
	\eqref{eq:expdecDS}. For any pair $(p,d)$ of integers greater than or 
	equal to $1$, define
	\[
		E(p) = \frac{p}{2}\frac{ 2^{p}}{(\sqrt{\pi})^{p}} \Gamma(p/2 + 1/2)\cI(p) 
		\quad \mbox{and} \quad
		\hat{E}(d) = \frac{\Gamma(d/2)}{\Gamma(d/2+1/2)} 
	\]
	so that $\Eds[p,d]=E(p)\hat{E}(d)$. Then,
    \begin{equation}
    \label{eq:E1overE1}
    	\frac{\Eds[p_1, d_1]}{\Eds[p_2,d_1]} 
   		= \frac{E(p_1)}{E(p_2)} 
   		= \frac{\Eds[p_1, d_2]}{\Eds[p_2, d_2]},
    \end{equation}
    proving (iii), and
    \begin{equation*}
    	\frac{\Eds[p_1, d_1]}{\Eds[p_1, d_2]} 
    	= \frac{\hat{E}(d_1)}{\hat{E}(d_2)} 
    	= \frac{\Eds[p_2,d_1]}{\Eds[p_2,d_2]},
    \end{equation*}
    proving (iv).
\end{proof}

Another consequence of the separable nature of the 
expression~\eqref{eq:expdecDS} is that the asymptotic behaviour of this 
quantity as $d \rightarrow \infty$ depends entirely on $p$. To establish this 
property, we rely on the following lemma.

\begin{lemma}
\label{lem:asympt}
	Asymptotically, 
    \begin{equation*}
        \frac{\Gamma(d/2)}{\Gamma(d/2+1/2)} 
        \rightarrow 
        \frac{\sqrt{2}}{\sqrt{d}} 
        ~\mbox{as}~d\rightarrow \infty.
    \end{equation*}
\end{lemma}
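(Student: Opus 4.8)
The plan is to prove the asymptotic relation $\Gamma(d/2)/\Gamma(d/2+1/2) \sim \sqrt{2}/\sqrt{d}$ as $d \to \infty$ by appealing to a standard asymptotic expansion for ratios of Gamma functions. The cleanest route is to invoke the classical result that for fixed $a$ and $b$, $\Gamma(x+a)/\Gamma(x+b) \sim x^{a-b}$ as $x \to \infty$; this follows directly from Stirling's formula $\Gamma(x) \sim \sqrt{2\pi}\, x^{x-1/2} e^{-x}$. Applying this with $x = d/2$, $a = 0$, $b = 1/2$ gives $\Gamma(d/2)/\Gamma(d/2+1/2) \sim (d/2)^{-1/2} = \sqrt{2}/\sqrt{d}$, which is exactly the claim.

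Concretely, the steps I would carry out are as follows. First, write $\Gamma(d/2) = \Gamma(x)$ and $\Gamma(d/2+1/2) = \Gamma(x+1/2)$ with $x = d/2 \to \infty$. Second, substitute Stirling's approximation into both numerator and denominator, so that the ratio becomes
\[
\frac{\Gamma(x)}{\Gamma(x+1/2)} \sim \frac{\sqrt{2\pi}\, x^{x-1/2} e^{-x}}{\sqrt{2\pi}\, (x+1/2)^{x} e^{-(x+1/2)}} = e^{1/2}\, x^{-1/2} \left(1 + \frac{1}{2x}\right)^{-x}.
\]
Third, take the limit: $(1 + \tfrac{1}{2x})^{-x} \to e^{-1/2}$ as $x \to \infty$, so the $e^{1/2}$ and $e^{-1/2}$ factors cancel, leaving $x^{-1/2} = (d/2)^{-1/2} = \sqrt{2}/\sqrt{d}$. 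A brief remark noting that the ratio of the two quantities tends to $1$ (which is what "$\to$" means here, read as asymptotic equivalence) completes the argument.

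The main obstacle, such as it is, is mostly bookkeeping rather than a genuine difficulty: one must be careful that the error terms in Stirling's formula are uniform enough to justify passing to the limit in the quotient, and that the manipulation $x^{x-1/2}/(x+1/2)^x = x^{-1/2}(x/(x+1/2))^x$ is handled correctly. If one prefers to avoid even this mild care, an alternative is to cite a reference result on Gamma function ratios (e.g.\ the asymptotic $\Gamma(x+\alpha)/\Gamma(x+\beta) = x^{\alpha-\beta}(1 + O(1/x))$, which is standard in handbooks of special functions), in which case the proof reduces to a single line of substitution. Either way, the argument is short; the only thing worth stating explicitly is the limit $\lim_{x\to\infty}(1+\tfrac{1}{2x})^{-x} = e^{-1/2}$, which drives the cancellation.
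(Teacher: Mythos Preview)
Your argument via Stirling's formula is correct and is the textbook route to the asymptotic $\Gamma(x+a)/\Gamma(x+b)\sim x^{a-b}$; the bookkeeping you flag (handling $(1+\tfrac{1}{2x})^{-x}\to e^{-1/2}$ and the error terms) is routine and poses no real obstacle.

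The paper, however, proceeds differently: it invokes Gautschi's inequality $x^{1-s}<\Gamma(x+1)/\Gamma(x+s)<(x+1)^{1-s}$ with $x=d/2$, $s=1/2$, combines this with the functional equation $\Gamma(d/2+1)=(d/2)\Gamma(d/2)$ to obtain explicit two-sided bounds $\sqrt{2}/\sqrt{d}<\Gamma(d/2)/\Gamma(d/2+1/2)<\sqrt{2}\sqrt{d+2}/d$, and then applies the squeeze theorem. The advantage of the paper's approach is that it yields finite-$d$ inequalities rather than merely an asymptotic equivalence, and indeed the same Gautschi bound is reused verbatim later (e.g.\ in the proof of Proposition~\ref{prop:intdecrDSsimple}), so introducing it here is economical for the paper as a whole. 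Your Stirling-based argument is perhaps more familiar and requires no special-function inequality, but it does not directly furnish the non-asymptotic bounds that the paper exploits elsewhere.
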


\begin{proof} 
Gautschi's inequality \cite[Eq.~(5.6.4)]{NIST_DLMF} states
\begin{equation*}
	{\displaystyle x^{1-s}
	<{\frac {\Gamma (x+1)}{\Gamma (x+s)}}
	<\left(x+1\right)^{1-s},}
\end{equation*}
for all $x>0$ and $s\in(0,1)$. Setting $x=\frac{d}{2}$ and $s=\frac{1}{2}$ 
provides 
\begin{equation}
\label{eq:gautschiex}
    \frac{\sqrt{d}}{\sqrt{2}} 
    < \frac{\Gamma (d/2+1)}{\Gamma (d/2+1/2)}
    < \frac{\sqrt{d+2}}{\sqrt{2}}. 
\end{equation}
Applying $\Gamma(d/2+1) = \frac{d}{2}\Gamma(d/2)$ now shows 
\begin{equation*}
    \displaystyle 
    \frac{\sqrt{2}}{\sqrt{d}} 
    < {\frac{\Gamma (d/2)}{\Gamma (d/2+1/2)}} 
    < \frac{\sqrt{2}\sqrt{d+2}}{d}.
\end{equation*}
Passing to a limit provides the asymptotic. 
\end{proof}

Combining the result of Lemma~\ref{lem:asympt} with 
Corollary~\ref{cor:p12DS}(i) and (ii) leads to the following 
asymptotics.

\begin{corollary}
\label{cor:p12limitsDS}
	Under the same assumptions as Corollary~\ref{cor:p12DS}, 
	asymptotically 
	\begin{equation*}
		\Eds[1,d_1] \rightarrow \frac{\sqrt{2}}{\sqrt{\pi}\sqrt{d_1}} 
		~\mbox{as}~d_1\rightarrow \infty
	\end{equation*}
	and for $d_1 >2$, asymptotically
	\begin{equation*}
		\Eds[2,d_1] 
		\rightarrow \frac{2}{\sqrt{\pi}\sqrt{d_1}} 
		~\mbox{as}~d_1\rightarrow \infty.
	\end{equation*}
\end{corollary}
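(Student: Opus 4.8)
The plan is to combine the closed-form expressions for $\Eds[1,d_1]$ and $\Eds[2,d_1]$ from Corollary~\ref{cor:p12DS}(i) and (ii) with the asymptotic ratio of Gamma functions established in Lemma~\ref{lem:asympt}. Both results are already in hand, so the proof is essentially a substitution followed by passing to the limit.

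First I would recall from Corollary~\ref{cor:p12DS}(i) that
\begin{equation*}
	\Eds[1,d_1] = \frac{1}{\sqrt{\pi}}\frac{\Gamma(d_1/2)}{\Gamma(d_1/2+1/2)}.
\end{equation*}
Multiplying and dividing by $\sqrt{d_1}$, we may write this as $\frac{1}{\sqrt{\pi}\sqrt{d_1}} \cdot \sqrt{d_1}\,\frac{\Gamma(d_1/2)}{\Gamma(d_1/2+1/2)}$. By Lemma~\ref{lem:asympt}, the factor $\sqrt{d_1}\,\frac{\Gamma(d_1/2)}{\Gamma(d_1/2+1/2)}$ tends to $\sqrt{2}$ as $d_1\to\infty$, which immediately gives $\Eds[1,d_1]\to \frac{\sqrt{2}}{\sqrt{\pi}\sqrt{d_1}}$ in the sense that the ratio of the two sides tends to $1$. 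The same argument applied to Corollary~\ref{cor:p12DS}(ii), namely $\Eds[2,d_1] = \frac{\sqrt{2}}{\sqrt{\pi}}\frac{\Gamma(d_1/2)}{\Gamma(d_1/2+1/2)}$ (valid for $d_1>2$), yields $\Eds[2,d_1]\to \frac{\sqrt{2}\cdot\sqrt{2}}{\sqrt{\pi}\sqrt{d_1}} = \frac{2}{\sqrt{\pi}\sqrt{d_1}}$.

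There is no real obstacle here; the only point requiring mild care is the precise meaning of the arrow notation, which should be read—consistently with Lemma~\ref{lem:asympt}—as asymptotic equivalence of the two expressions rather than convergence of $\Eds[p,d_1]$ itself (which goes to $0$). Since Lemma~\ref{lem:asympt} is stated in exactly this form, invoking it directly keeps everything consistent, and the proof reduces to the two line substitutions above.
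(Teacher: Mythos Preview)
Your proposal is correct and matches the paper's own approach exactly: the paper simply states that Corollary~\ref{cor:p12limitsDS} follows by combining Lemma~\ref{lem:asympt} with Corollary~\ref{cor:p12DS}(i) and (ii), which is precisely the substitution you carry out. Your additional remark clarifying that the arrow should be read as asymptotic equivalence is a welcome precision but does not deviate from the paper's intended argument.
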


\subsection{Expected decrease per function evaluation}
\label{ssec:DSperfun}

Derivation-free algorithms are typically used in situations where function evaluations are considered to be expensive 
calculations. As such, the effectiveness of a derivative-free algorithm is not 
gauged by expected decrease per iteration, but expected decrease per function 
evaluation. We thus wish to account for this cost in our formula for expected 
decrease.

Returning to Algorithm \ref{alg:DS}, we assume that the function value of the 
incumbent solution $x^k$ is already known from the output of the previous 
iteration.  As such, one iteration of Algorithm \ref{alg:DS} will evaluate the 
function at $2p$ new points, where $p$ is the subspace dimension. In this 
section, we are thus interested in the quantity
\begin{equation}
\label{eq:eDSF}
	\Eds^F[p,d] := \frac{\Eds[p,d]}{2p}.
\end{equation}
Our goal is then to study the variation of the quantity $\Eds^F[p,d]$ as a 
function of $p$. In order to derive such a result, we require the following 
lemma.

\begin{lemma} 
\label{lem:intdecrDS}
	Let the assumptions of Theorem~\ref{thm:expdecDS} hold, and $\cI(p)$ be 
	defined as in this theorem. Then, for any $p \le d-1$,
	\begin{equation*}
	\label{eq:intdecrDS}
		\frac{2}{\sqrt{\pi}} \frac{\Gamma(p/2+1)}{\Gamma(p/2+1/2)} 
		< \frac{\cI(p)}{\cI(p+1)}.
	\end{equation*}
\end{lemma}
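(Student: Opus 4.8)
The plan is to bound the ratio $\cI(p)/\cI(p+1)$ from below by relating $\cI(p+1)$ to $\cI(p)$ through the single extra integration variable that appears when passing from dimension $p$ to dimension $p+1$. Recall from~\eqref{eq:IpexpdecDS} that $\cI(p+1) = \int_{R(p+1)} \prod_{i=1}^{p} \sin^i(\varphi_i)\, d\varphi_p \cdots d\varphi_1$. The key structural observation is that the innermost variable is $\varphi_p$, and the outer variables $\varphi_1,\dots,\varphi_{p-1}$ range over essentially the same region $R(p)$ as in $\cI(p)$, except that the upper endpoint for $\varphi_p$ depends on $\varphi_1,\dots,\varphi_{p-1}$ via $\arctan(\prod_{j=1}^{p-1}\csc\varphi_j)$. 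So I would write
\begin{equation*}
	\cI(p+1) = \int_{R(p)} \left[\prod_{i=1}^{p-1}\sin^i(\varphi_i)\right] \left( \int_{\arctan(\prod_{j=1}^{p-1}\csc\varphi_j)}^{\pi/2} \sin^p(\varphi_p)\, d\varphi_p \right) d\varphi_{p-1}\cdots d\varphi_1,
\end{equation*}
so that the problem reduces to bounding the inner integral $\int_{a}^{\pi/2}\sin^p(\varphi_p)\,d\varphi_p$ \emph{from above} uniformly over the relevant range of the lower limit $a = \arctan(\prod_{j=1}^{p-1}\csc\varphi_j)$.

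The crudest bound $\int_a^{\pi/2}\sin^p \le \int_0^{\pi/2}\sin^p = \frac{\sqrt\pi}{2}\frac{\Gamma(p/2+1/2)}{\Gamma(p/2+1)}$ (using~\eqref{eq:sinusint}) already gives $\cI(p+1) \le \frac{\sqrt\pi}{2}\frac{\Gamma(p/2+1/2)}{\Gamma(p/2+1)}\,\cI(p)$, which upon rearranging is exactly the claimed inequality $\frac{2}{\sqrt\pi}\frac{\Gamma(p/2+1)}{\Gamma(p/2+1/2)} < \frac{\cI(p)}{\cI(p+1)}$ --- provided the inequality is strict. Strictness is immediate because the lower limit $a$ is strictly positive on a set of positive measure (indeed $\prod_{j=1}^{p-1}\csc\varphi_j \ge 1$ with equality only when all $\varphi_j = \pi/2$, and on $R(p)$ we have $\varphi_1 \in [\pi/4,\pi/2]$, so $a$ is bounded away from $0$ on a positive-measure subset), hence $\int_a^{\pi/2}\sin^p < \int_0^{\pi/2}\sin^p$ strictly there. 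I should also separately check the base case $p=1$: there $\cI(1)=1$ and $\cI(2) = \int_{\pi/4}^{\pi/2}\sin\varphi_1\,d\varphi_1 = 1/\sqrt2$, and the claimed bound reads $\frac{2}{\sqrt\pi}\frac{\Gamma(3/2)}{\Gamma(1)} = \frac{2}{\sqrt\pi}\cdot\frac{\sqrt\pi}{2} = 1 < \sqrt2 = \cI(1)/\cI(2)$, which holds.

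The main thing to be careful about is the bookkeeping when peeling off the last variable: I must confirm that after relabelling, the region over which $(\varphi_1,\dots,\varphi_{p-1})$ ranges inside the definition of $\cI(p+1)$ coincides exactly with $R(p)$ as defined in~\eqref{eq:RpexpdecDS}, so that the factored outer integral literally equals $\cI(p)$. Looking at~\eqref{eq:RpexpdecDS}, the constraint on $\varphi_i$ for $i=2,\dots,p-1$ is $\varphi_i \in [\arctan(\prod_{j=1}^{i-1}\csc\varphi_j), \pi/2]$ and $\varphi_1 \in [\pi/4,\pi/2]$ --- none of these involve $\varphi_p$, so the outer region is indeed $R(p)$ and the factorization is exact. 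The only genuinely non-routine point is ensuring strictness is justified rigorously rather than by appeal to ``obviously''; everything else is the elementary estimate $\int_a^{\pi/2}\sin^p \le \int_0^{\pi/2}\sin^p$ combined with~\eqref{eq:sinusint}. I do not expect any real obstacle here; the lemma is essentially a one-line consequence of monotonicity of the integral in its lower limit, dressed up with the Gamma-function identity.
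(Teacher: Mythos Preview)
Your proposal is correct and follows essentially the same approach as the paper: factor $\cI(p+1)$ as an outer integral over $R(p)$ times an inner integral $\int_a^{\pi/2}\sin^p(\varphi_p)\,d\varphi_p$, then bound the latter strictly by $\int_0^{\pi/2}\sin^p = \tfrac{\sqrt\pi}{2}\Gamma(p/2+1/2)/\Gamma(p/2+1)$. The paper inserts one extra step---an induction showing each lower limit satisfies $a\ge\pi/4$, hence $\int_a^{\pi/2}\le\int_{\pi/4}^{\pi/2}<\int_0^{\pi/2}$---but your direct bound $\int_a^{\pi/2}<\int_0^{\pi/2}$ (justified since $a\ge\arctan 1=\pi/4>0$ everywhere on $R(p)$) is a legitimate shortcut to the same conclusion.
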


\begin{proof}
If $p=1$, using the values of $\cI(1)$ and $\cI(2)$ from the proof of 
Corollary~\ref{cor:p12DS} gives
\begin{equation*}
	\cI(p+1) = \cI(2) 
	= \frac{1}{\sqrt{2}} 
	< 1 
	= \frac{\sqrt{\pi}}{2}\frac{\Gamma(1)}{\Gamma(3/2)} \cI(1) 
	= \frac{\sqrt{\pi}}{2}\frac{ \Gamma(p/2+1/2)}{\Gamma(p/2+1)} \cI(p).
\end{equation*}

Suppose now that $p>1$. Using the definition of $\cI(p)$~\eqref{eq:IpexpdecDS}, 
we have
\begin{equation}
\label{eq:intdecrDSaux}
	\cI(p+1) 
	= \int_{R(p)} \int_{\arctan(\csc(\varphi_1)\cdots \csc(\varphi_{p}))}^{\pi/2} 
	\left[\prod_{i=1}^{p} \sin^i(\varphi_i)\right] 
	d\varphi_{p} d\varphi_{p-1} \cdots d\varphi_1,
\end{equation}
showing that $\cI(p+1)$ is formed by including an extra inner integral 
inside the expression for $\cI(p)$. We now bound the lower limit of region of 
integration for $\varphi_p$ through induction. From the definition of $R(p)$, 
we have $\varphi_1 \in [\pi/4,\pi/2]$. Inductively, suppose that the region of 
integration of $\varphi_i$ is a subset of $[\pi/4,\pi/2]$ for $i=1,\ldots,k-1$. 
In that case, we have $\csc(\varphi_i)\in[1,\sqrt{2}]$ for all 
$i=1,\ldots,k-1$ and so 
\[
	\varphi_{k} \geq \arctan(\csc(\varphi_1)\cdots\csc(\varphi_{k-1})) 
	\ge \arctan(1) = \pi/4,
\] 
which implies that the region of integration for $\varphi_k$ is a subset of 
$[\pi/4,\pi/2]$. 

By the principles of mathematical induction, we have thus established that the 
region of integration of $\varphi_{p}$ is a subset of $[\pi/4,\pi/2]$. 
Applying this to the region of integration approximation to 
\eqref{eq:intdecrDSaux}, we find
\begin{eqnarray*}
	\cI(p+1) 
	&\le &\int_{R(p)} \int_{\pi/4}^{\pi/2} \left[
	\prod_{i=1}^{p} \sin^i(\varphi_i)\right] 
	d\varphi_{p} d\varphi_{p-1} \cdots d\varphi_1, \\
	&= &\left(\int_{R(p)} \left[
	\prod_{i=1}^{p-1} \sin^i(\varphi_i)\right] 
	d\varphi_{p-1} \cdots d\varphi_1\right) \left(
	\int_{\pi/4}^{\pi/2} \sin^{p}(\varphi_{p}) d\varphi_{p}\right), \\
    &= &\cI(p) \int_{\pi/4}^{\pi/2} \sin^{p}(\theta) d\theta.
\end{eqnarray*}
The result now follows from
\[
	\int_{\pi/4}^{\pi/2} \sin^{p}(\theta) d\theta 
	< \int_{0}^{\pi/2} \sin^{p}(\theta) d\theta 
	= \frac{\sqrt{\pi}}{2} \frac{\Gamma(p/2+1/2)}{\Gamma(p/2+1)},
\]
where the last equality uses the identity \eqref{eq:intdecrDSaux}.
\end{proof}

Using the previous result, we can approximate the rate at which $\cI$ 
decreases as a function of $p$.

\begin{proposition}
\label{prop:intdecrDSsimple}
	Let the assumptions of Theorem~\ref{thm:expdecDS} hold, and $\cI(p)$ be 
	defined as in Theorem~\ref{thm:expdecDS}. Then, for any $p \le d-1$,
	\begin{equation*}
		\cI(p+1) < \frac{\sqrt{\pi}}{\sqrt{2}\sqrt{p}}  \cI(p).
	\end{equation*}
\end{proposition}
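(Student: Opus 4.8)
The plan is to bound the Gamma-function ratio appearing in Lemma~\ref{lem:intdecrDS} from below by a simpler expression, so that inverting the inequality yields the stated bound on $\cI(p+1)/\cI(p)$. Concretely, Lemma~\ref{lem:intdecrDS} gives
\[
	\frac{\cI(p)}{\cI(p+1)} > \frac{2}{\sqrt{\pi}} \frac{\Gamma(p/2+1)}{\Gamma(p/2+1/2)},
\]
so it suffices to show that $\frac{2}{\sqrt{\pi}} \frac{\Gamma(p/2+1)}{\Gamma(p/2+1/2)} \ge \frac{\sqrt{2}\sqrt{p}}{\sqrt{\pi}}$, or equivalently that $\frac{\Gamma(p/2+1)}{\Gamma(p/2+1/2)} \ge \frac{\sqrt{p}}{\sqrt{2}}$. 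This is exactly the left-hand inequality in Gautschi's inequality already invoked in the proof of Lemma~\ref{lem:asympt}: setting $x = p/2$ and $s = 1/2$ in $x^{1-s} < \frac{\Gamma(x+1)}{\Gamma(x+s)}$ gives $\sqrt{p/2} < \frac{\Gamma(p/2+1)}{\Gamma(p/2+1/2)}$, which is the required bound (in fact strict).

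The key steps, in order, are: first invoke Lemma~\ref{lem:intdecrDS} to get the lower bound on $\cI(p)/\cI(p+1)$; second, apply Gautschi's inequality (as in the proof of Lemma~\ref{lem:asympt}, equation~\eqref{eq:gautschiex}, with the substitution $d \mapsto p$) to get $\frac{\Gamma(p/2+1)}{\Gamma(p/2+1/2)} > \frac{\sqrt{p}}{\sqrt{2}}$; third, chain these two inequalities to obtain $\frac{\cI(p)}{\cI(p+1)} > \frac{2}{\sqrt{\pi}} \cdot \frac{\sqrt{p}}{\sqrt{2}} = \frac{\sqrt{2}\sqrt{p}}{\sqrt{\pi}}$; and fourth, rearrange (both quantities being positive) to conclude $\cI(p+1) < \frac{\sqrt{\pi}}{\sqrt{2}\sqrt{p}} \cI(p)$. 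The inequalities are all strict, which is more than enough.

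I do not anticipate a genuine obstacle here — this is essentially a one-line consequence of combining two already-established results. The only thing to be slightly careful about is the range of validity: Lemma~\ref{lem:intdecrDS} requires $p \le d-1$, which matches the hypothesis of the proposition, and Gautschi's inequality holds for all $x > 0$, hence all $p \ge 1$, so no edge cases are lost. One could alternatively avoid citing Gautschi a second time by noting that $\frac{2}{\sqrt{\pi}}\frac{\Gamma(p/2+1)}{\Gamma(p/2+1/2)}$ is precisely $\left(\int_0^{\pi/2}\sin^p\theta\,d\theta\right)^{-1}$ by identity~\eqref{eq:sinusint} and using a known lower bound on the Wallis-type integral, but the Gautschi route is shorter and self-contained given what is already in the paper.
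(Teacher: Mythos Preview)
Your proposal is correct and follows essentially the same approach as the paper: invoke Lemma~\ref{lem:intdecrDS}, then apply Gautschi's inequality (exactly as in~\eqref{eq:gautschiex} with $d\mapsto p$) to bound $\Gamma(p/2+1)/\Gamma(p/2+1/2) > \sqrt{p}/\sqrt{2}$, and combine. The paper's proof is the same two-line argument.
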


\begin{proof} 
By Gautschi's inequality (see equation \eqref{eq:gautschiex}), we have that 
\[              
	\frac{\Gamma(p/2+1/2)}{\Gamma(p/2+1)} < \frac{\sqrt{2}}{\sqrt{p}}.
\]
Combining this with Lemma~\ref{lem:intdecrDS} completes the proof.
\end{proof}

We can now prove that the expected decrease per function evaluation is a 
strictly decreasing function of $p$.

\begin{theorem}
\label{thm:expdecperfunDS} 
	Let the assumptions of Theorem~\ref{thm:expdecDS} hold, and $\cI(p)$ be 
	defined as Theorem~\ref{thm:expdecDS}. Then, for any $p \le d-1$,
	\begin{equation*}
		\frac{\Eds[p,d]}{2p} > \frac{\Eds[p+1,d]}{2(p+1)}
	\end{equation*}
\end{theorem}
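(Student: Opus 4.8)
The plan is to reduce the claimed inequality $\Eds[p,d]/(2p) > \Eds[p+1,d]/(2(p+1))$ to a statement about the ratio $\Eds[p+1,d]/\Eds[p,d]$, and then to evaluate that ratio using the separable formula \eqref{eq:expdecDS} from Theorem~\ref{thm:expdecDS}. First I would write
\[
	\frac{\Eds[p+1,d]/(2(p+1))}{\Eds[p,d]/(2p)} = \frac{p}{p+1}\cdot\frac{\Eds[p+1,d]}{\Eds[p,d]},
\]
so the goal becomes showing this product is strictly less than $1$. Since the $d$-dependent factor $\Gamma(d/2)/\Gamma(d/2+1/2)$ in \eqref{eq:expdecDS} is common to both $\Eds[p+1,d]$ and $\Eds[p,d]$, it cancels, and we are left with a ratio involving only the $p$-dependent part $E(p) := \tfrac{p}{2}\tfrac{2^p}{(\sqrt\pi)^p}\Gamma(p/2+1/2)\,\cI(p)$ (the same $E(p)$ introduced in the proof of Corollary~\ref{cor:p12DS}). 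Explicitly,
\[
	\frac{\Eds[p+1,d]}{\Eds[p,d]} = \frac{E(p+1)}{E(p)} = \frac{p+1}{p}\cdot\frac{2}{\sqrt\pi}\cdot\frac{\Gamma(p/2+1)}{\Gamma(p/2+1/2)}\cdot\frac{\cI(p+1)}{\cI(p)}.
\]

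Substituting this into the earlier product, the factors $\tfrac{p}{p+1}$ and $\tfrac{p+1}{p}$ cancel, and the inequality to prove collapses to
\[
	\frac{2}{\sqrt\pi}\cdot\frac{\Gamma(p/2+1)}{\Gamma(p/2+1/2)}\cdot\frac{\cI(p+1)}{\cI(p)} < 1,
\]
i.e.\ $\cI(p+1) < \tfrac{\sqrt\pi}{2}\tfrac{\Gamma(p/2+1/2)}{\Gamma(p/2+1)}\,\cI(p)$. But this is almost exactly the content of Lemma~\ref{lem:intdecrDS}, which states $\tfrac{2}{\sqrt\pi}\tfrac{\Gamma(p/2+1)}{\Gamma(p/2+1/2)} < \cI(p)/\cI(p+1)$ for all $p \le d-1$; rearranging that inequality (all quantities are positive, and $\cI(p+1)>0$) gives precisely the bound needed. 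So the proof is essentially: expand both sides using Theorem~\ref{thm:expdecDS}, cancel the common $d$-factor and the $p/(p+1)$ combinatorial factors, and invoke Lemma~\ref{lem:intdecrDS}.

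There is no real obstacle here — the heavy lifting was already done in Lemma~\ref{lem:intdecrDS} (and its consequence Proposition~\ref{prop:intdecrDSsimple}); the only care needed is bookkeeping of the Gamma-function and power-of-two factors when forming the ratio $E(p+1)/E(p)$, and noting that the hypothesis $p \le d-1$ is exactly what Lemma~\ref{lem:intdecrDS} requires. One could alternatively route through Proposition~\ref{prop:intdecrDSsimple} together with Gautschi's inequality, but that would only give the weaker strict bound with a slightly worse constant; using Lemma~\ref{lem:intdecrDS} directly yields the clean strict inequality claimed. I would also remark that strictness is preserved throughout since $\cI(p+1) > 0$ and the inequality in Lemma~\ref{lem:intdecrDS} is strict.

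\begin{proof}
By Theorem~\ref{thm:expdecDS}, for $j \in \{p, p+1\}$ we have
\[
	\Eds[j,d] = \frac{j}{2}\frac{2^j}{(\sqrt\pi)^j}\,\frac{\Gamma(d/2)\Gamma(j/2+1/2)}{\Gamma(d/2+1/2)}\,\cI(j).
\]
Hence the $d$-dependent factor $\Gamma(d/2)/\Gamma(d/2+1/2)$ cancels in the ratio, and
\[
	\frac{\Eds[p+1,d]}{\Eds[p,d]} = \frac{p+1}{p}\cdot\frac{2}{\sqrt\pi}\cdot\frac{\Gamma((p+1)/2+1/2)}{\Gamma(p/2+1/2)}\cdot\frac{\cI(p+1)}{\cI(p)} = \frac{p+1}{p}\cdot\frac{2}{\sqrt\pi}\cdot\frac{\Gamma(p/2+1)}{\Gamma(p/2+1/2)}\cdot\frac{\cI(p+1)}{\cI(p)}.
\]
Therefore
\[
	\frac{\Eds[p+1,d]/(2(p+1))}{\Eds[p,d]/(2p)} = \frac{p}{p+1}\cdot\frac{\Eds[p+1,d]}{\Eds[p,d]} = \frac{2}{\sqrt\pi}\cdot\frac{\Gamma(p/2+1)}{\Gamma(p/2+1/2)}\cdot\frac{\cI(p+1)}{\cI(p)}.
\]
Since $p \le d-1$, Lemma~\ref{lem:intdecrDS} gives
\[
	\frac{2}{\sqrt\pi}\frac{\Gamma(p/2+1)}{\Gamma(p/2+1/2)} < \frac{\cI(p)}{\cI(p+1)},
\]
and, as $\cI(p+1) > 0$, multiplying both sides by $\cI(p+1)/\cI(p) > 0$ yields
\[
	\frac{2}{\sqrt\pi}\cdot\frac{\Gamma(p/2+1)}{\Gamma(p/2+1/2)}\cdot\frac{\cI(p+1)}{\cI(p)} < 1.
\]
Combining the last two displays shows $\Eds[p+1,d]/(2(p+1)) < \Eds[p,d]/(2p)$, as claimed.
\end{proof}
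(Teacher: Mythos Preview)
Your proof is correct and follows essentially the same approach as the paper: both use the separable formula~\eqref{eq:expdecDS} to cancel the $d$-dependent factor, reduce to a pure $p$-inequality, and invoke Lemma~\ref{lem:intdecrDS} to finish. The only cosmetic difference is that you work with the ratio $\Eds[p+1,d]/\Eds[p,d]$ and show the relevant product is $<1$, whereas the paper works with the reciprocal ratio and shows it exceeds $p/(p+1)$.
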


\begin{proof}
It suffices to show that 
\[
	\frac{\Eds[p,d]}{\Eds[p+1,d]} > \frac{p}{p+1}.
\]
As in the proof of Corollary~\ref{cor:p12DS}, we define
\[
	E_1(p) = \frac{p}{2} \frac{2^{p}}{(\sqrt{\pi})^{p}} 
	\Gamma(p/2 + 1/2)\cI(p).
\]
From equation \eqref{eq:E1overE1}, we have 
\begin{eqnarray*}
	\frac{\Eds[p,d]}{\Eds[p+1,d]}
	&= &\frac{E_1(p)}{E_1(p+1)}\\
	&= &\frac{(p/2)( 2^{p}/\sqrt{\pi}^{p}) \Gamma(p/2+1/2) \cI(p)}
	{(p/2+1/2)( 2^{p+1}/\sqrt{\pi}^{p+1}) \Gamma(p/2+1) \cI(p+1)}\\
	&= &\frac{p}{p+1}\frac{\sqrt{\pi}}{2}\frac{\Gamma(p/2+1/2)}{\Gamma(p/2+1)}
	\frac{\cI(p)}{\cI(p+1)}\\
	&> &\frac{p}{p+1}\frac{\sqrt{\pi}}{2}\frac{\Gamma(p/2+1/2)}{\Gamma(p/2+1)}
	\frac{2}{\sqrt{\pi}} \frac{\Gamma(p/2+1)}{\Gamma(p/2+1/2)}\\
	&= &\frac{p}{p+1},
\end{eqnarray*}
where the strict inequality arises from applying Lemma~\ref{lem:intdecrDS}. 
\end{proof}

The result of Theorem~\ref{thm:expdecperfunDS} suggests that performing 
direct-search iterations is more beneficial with low-dimensional subspaces,  
and that $p=1$ provides the best return on investment. Although our result 
applies to a linear function, we emphasize again that it can be connected to 
general smooth functions through arguments such as that of 
Proposition~\ref{pr:dec}.

To end this section, we discuss how our analysis can be adapted to classical 
considerations for direct-search methods in practice.

\paragraph{Opportunistic polling:} 
In Algorithm~\ref{alg:DS}, all candidate points are sampled in order to select 
the best one, i.e., complete polling is performed. In a serial environment, a 
cheaper practice called opportunistic polling consists in accepting the first 
point that yields decrease. Remarkably, this strategy does not jeopardize 
convergence and can bring significant savings in 
practice~\cite{TGKolda_RMLewis_VTorczon_2003}\cite{LRoberts_CWRoyer_2023}. 

Our analysis can be adapted to account for opportunistic polling under the 
assumption that Algorithm~\ref{alg:DS} evaluates the directions in the order 
$\{e_1,-e_1,\ldots\}$ (or more generally by evaluating pairs of opposite 
directions consecutively). In that case, with probability $1$ either $e_1$ 
or $-e_1$ will lead to a decrease in $f^{lin}$, and thus the step will be 
accepted. The expected decrease guarantees are therefore equivalent to those 
in the case $p=1$. In fact, one can go one step further by considering 
that on average, one performs $3/2$ evaluations as $e_1$ has a 50\% chance 
of being a direction of decrease. With that consideration, the expected 
decrease guarantee becomes 
\[
	\frac{2}{3\sqrt{\pi}}\frac{\Gamma(d/2)}{\Gamma(d/2+1/2)},
\]
which improves over the quantity~\eqref{eq:eDSF} for $p=1$.  As this result 
even holds for $p=d$, this provides a novel explanation for the performance of direct-search 
approaches using opportunistic polling (with or without random subspaces).  

\paragraph{Parallel processing:}
Using multiple cores to perform function evaluations in parallel is a common 
paradigm that affects the per-iteration workload. 
If $c$ cores are dedicated to distinct function evaluations and complete 
polling is performed, then one can consider that Algorithm~\ref{alg:DS} 
has an evaluation cost of $\lceil 2p/c \rceil$, where $\lceil \cdot \rceil$ 
denotes the ceiling function. Since the expected decrease $\Eds[p,d]$ is a 
decreasing function of $p$ (see, Corollary~\ref{cor:p12DS}) and assuming 
$2p/c$ is an integer number, Theorem~\ref{thm:expdecperfunDS} implies that
\begin{equation*}
	\frac{\Eds[p,d]}{\lceil 2p/c\rceil} 
	\ge c \frac{\Eds[p,d]}{2p} 
	> c \frac{\Eds[p+c/2,d]}{2(p+c/2)} 
	= \frac{\Eds[p+c/2,d]}{\lceil 2(p+c/2)/c\rceil}.
\end{equation*}
Consequently, in this parallel setting, the expected decrease per unit of work 
is maximized for $p=c/2$, i.e. the smallest subspace dimension that exploits 
all $c$ cores. Such a result shows that our analysis can be adapted to the 
computational power available to perform function evaluations.

\section{Analysis in the model-based setting}
\label{sec:theoMB}

In this section, we examine expected decrease for 
Algorithm~\ref{alg:MB}, i.e., when a model-based strategy is used to perform 
steps in the random subspace. The analysis is similar to that of 
Section~\ref{sec:theoDS} yet presents significant differences, as we will 
discuss below. Section~\ref{ssec:edMB} establishes the main expected decrease 
result, while Section~\ref{ssec:MBperfun} considers the results in light of  
per-iteration evaluation cost.

\subsection{Expected decrease formula}
\label{ssec:edMB}

We begin by deriving an expression for the expected decrease that does not 
depend on the selected basis for the random subspace.

\begin{proposition} 
\label{prop:EequalsMB}
	Consider the linear function $f^{lin}$ with $g \sim \Sd$, and suppose 
	that Algorithm~\ref{alg:DFAwRS} is applied using Algorithm~\ref{alg:MB} 
	as \algoclass{} (which we denote by \algoclass{}=\MB) with $\delta^k=1$. 
	Then, for any $k$, the expected decrease guarantee satisfies
	\begin{equation}
	\label{eq:EequalsMB}
		\Emb[p,d] = \EE[\tilde{g} \sim \Sd]{
		\sqrt{\sum_{i=1,\dots,p} \tilde{g}_i^2}
		}.
	\end{equation}		
\end{proposition}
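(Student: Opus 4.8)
The plan is to mimic the structure of the proof of Proposition~\ref{prop:EequalsDS}: first write the decrease for a fixed subspace basis $B$ as an explicit function of $B^\top g$, then use the rotational invariance of the uniform distribution on the Stiefel manifold (Lemma~\ref{lem:stiefel}) to replace $B^\top g$ by $I_{d,p}^\top \tilde g$ for a fresh uniform $\tilde g \sim \Sd$.

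First I would compute the decrease for $f^{lin}$ under one \texttt{mb} iteration with a fixed orthonormal basis $B = [b_1\ \cdots\ b_p]$ and $\delta = 1$. Writing $f^k|_p(z) = f^{lin}(x^k + Bz) = g^\top x^k + (B^\top g)^\top z$, the simplex gradient $\nabla_S f^k|_p(0, I_p)$ equals $B^\top g$ exactly (this is the exact-gradient property of simplex gradients on linear functions, already noted after~\eqref{eq:flin}). Hence the \texttt{mb} step direction is $u = -B^\top g / \|B^\top g\|$, and the candidate point $z^* = \delta u$ gives $f^k|_p(z^*) - f^k|_p(0) = (B^\top g)^\top u = -\|B^\top g\|$. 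Since this is $\le 0$, the $\argmin$ in Algorithm~\ref{alg:MB} selects $z^*$ (with probability $1$, as $B^\top g = 0$ only on a measure-zero set), so $f^{lin}(x^k) - f^{lin}(x^{k+1}) = \|B^\top g\|_2$. Therefore $\Emb[p,d] = \EE[\substack{B \sim \V_{p,d} \\ g \sim \Sd}]{\|B^\top g\|_2}$.

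Next I would apply the distributional identities. By Lemma~\ref{lem:stiefel}(iv), $B = Q I_{d,p}$ for $Q \sim \V_{d,d}$ independent of $g$, and by Lemma~\ref{lem:stiefel}(ii)--(iii), $Q^\top g$ is again uniform on $\Sd$; calling it $\tilde g$, we get
\[
	\Emb[p,d] = \EE[\substack{Q \sim \V_{d,d} \\ g \sim \Sd}]{\|I_{d,p}^\top Q^\top g\|_2} = \EE[\tilde g \sim \Sd]{\|I_{d,p}^\top \tilde g\|_2} = \EE[\tilde g \sim \Sd]{\sqrt{\textstyle\sum_{i=1}^p \tilde g_i^2}},
\]
which is exactly~\eqref{eq:EequalsMB}.

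There is no serious obstacle here; the proof is essentially routine once the exact-simplex-gradient observation is invoked. The only point needing a little care is verifying that the step is always \emph{accepted} (i.e., that the $\argmin$ returns $z^*$ rather than $z$): this holds because on a linear function any nonzero descent step strictly decreases the objective, and $\|B^\top g\| = 0$ happens only when $g \perp \mathrm{span}(B)$, a probability-zero event — exactly the same edge-case handling as in Proposition~\ref{prop:EequalsDS}. The remaining manipulations are a verbatim reuse of the Stiefel-invariance argument already given there.
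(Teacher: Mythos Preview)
Your proposal is correct and follows essentially the same approach as the paper's own proof: compute the simplex gradient as $B^\top g$, obtain the per-iteration decrease $\|B^\top g\|$, handle the measure-zero degenerate case, and then apply the Stiefel invariance from Lemma~\ref{lem:stiefel} exactly as in Proposition~\ref{prop:EequalsDS}.
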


\begin{proof}
As in the proof of Proposition~\ref{prop:EequalsDS}, we assume without loss of 
generality that $x^{k+1} \neq x^k$. Let $B=[b_1 \cdots b_p]$ with 
$b_i \in \R^d$. Since $\delta^k=1$, the simplex gradient calculated by 
Algorithm~\ref{alg:MB} is given by
\begin{equation*}
	\nabla_S f^{lin}|_p (x^k, {\tt I_p}) 
	=   I_{p\times p}
	\begin{bmatrix} 
		f^{lin}(x^k+ b_1) - f(x^k) \\ 
		\vdots \\ 
		f^{lin}(x^k+b_p) - f(x^k) 
	\end{bmatrix} 
	= B^{\top} g.
\end{equation*}
Therefore, the decrease obtained for $\delta^k=1$ is 
\begin{eqnarray*}
	f(x^k) - f(x^{k+1}) 
	&= &f(x^k) - f\left(x^k-B\frac{B^\T g}{\|B^\T g\|}\right) \nonumber \\
	&= &g^\T B \frac{B^\T g}{\|B^\T g} = \|B^\T g\|.
\end{eqnarray*}
In terms of expected decrease, we therefore obtain
\begin{equation*}
	\Emb[p,d] 
	= 
	\EE[\substack{B \sim \V_{p,d} \\ g \sim \V_{1,d}}]{\|B^\T g\|}.
\end{equation*}
By the same argument as in the proof of Proposition~\ref{prop:EequalsMB}, we 
can write $B=Q I_{d,p}$ with $Q \sim \V_{d,d}$ and $I_{d,p}$ containing the 
first $p$ coordinate directions in $\R^d$, and $Q^\T g$ is uniformly 
distributed in $\V_{1,d}$. This leads to
\begin{eqnarray*}
	\Emb[p,d] 
	&= &\EE[\substack{B \sim \V_{p,d} \\ g \sim \V_{1,d}}]{\|B^\T g\|} \\
	&= &\EE[\tilde{g} \sim \V_{1,d}]{\|I_{d,p}^\T \tilde{g}\|} \\
	&= &\EE[\tilde{g} \sim \V_{1,d}]{
	\sqrt{\sum_{i=1,\dots,p} \tilde{g}_i^2}},
\end{eqnarray*}
proving~\eqref{eq:EequalsMB}.	
\end{proof}
	
We now derive an expression for~\eqref{eq:EequalsMB}. Similarly to 
the direct-search case, when $d=p=1$, the expected decrease has a 
trivial expression
\[
    \Emb[1, 1] = 1.
\]

We assume in the rest of this section that $d>1$. In that case, the general 
form of the expected decrease is surprisingly elegant in that it does not 
include a trigonometric integral.

\begin{theorem}
\label{thm:expdecMB}
	Under the assumptions of Proposition~\ref{prop:EequalsMB}, suppose 
	further that $d>1$. Then, the expected decrease is given by
	\begin{equation}
	\label{eq:expdecMB}
    	\Emb[p,d] 
    	= 
    	\frac{\Gamma(d/2) ~\Gamma(p/2+1/2)}{\Gamma(d/2+1/2) ~\Gamma(p/2)}.
    \end{equation}
\end{theorem}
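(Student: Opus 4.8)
By Proposition~\ref{prop:EequalsMB}, the task reduces to computing $\EE[\tilde{g}\sim\Sd]{\|(\tilde{g}_1,\dots,\tilde{g}_p)\|}$, i.e. the expected norm of the first $p$ coordinates of a uniform random point on the sphere $\Sd$. The plan is to exploit the standard fact that a uniform point on $\Sd$ can be realized as $Y/\|Y\|$ where $Y\sim\mathcal{N}(0,I_d)$. Writing $Y=(Y^{(1)},Y^{(2)})$ with $Y^{(1)}\in\R^p$ and $Y^{(2)}\in\R^{d-p}$, the quantity of interest becomes $\E{\|Y^{(1)}\|/\|Y\|} = \E{\sqrt{R_1/(R_1+R_2)}}$, where $R_1 = \|Y^{(1)}\|^2 \sim \chi^2_p$ and $R_2 = \|Y^{(2)}\|^2\sim\chi^2_{d-p}$ are independent (when $p<d$; the case $p=d$ gives $1$ directly, consistent with the formula). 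The ratio $U := R_1/(R_1+R_2)$ then follows a Beta distribution with parameters $(p/2,(d-p)/2)$.

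The core computation is therefore $\E{\sqrt{U}}$ for $U\sim\mathrm{Beta}(p/2,(d-p)/2)$, which is a routine moment integral:
\begin{equation*}
	\E{U^{1/2}}
	= \frac{B(p/2+1/2,(d-p)/2)}{B(p/2,(d-p)/2)}
	= \frac{\Gamma(p/2+1/2)\,\Gamma(d/2)}{\Gamma(p/2)\,\Gamma(d/2+1/2)},
\end{equation*}
using $B(a,b)=\Gamma(a)\Gamma(b)/\Gamma(a+b)$ and the cancellation of the $\Gamma((d-p)/2)$ factors together with $p/2+(d-p)/2 = d/2$. This matches~\eqref{eq:expdecMB} exactly. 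For the boundary case $p=d$ one checks the formula degenerates correctly: $\Gamma(d/2+1/2)$ cancels and $\Emb[d,d]=1$, agreeing with the fact that in that situation $B^\T g$ has the same norm as $g$, namely $1$.

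The only mildly delicate points are bookkeeping ones rather than genuine obstacles: one must justify the Gaussian representation of the uniform sphere measure (a cited standard fact, cf.\ Lemma~\ref{lem:stiefel}-style arguments), verify the independence of $R_1,R_2$, and confirm the Beta distribution of the ratio — all textbook facts. I would also double-check the edge cases $p=d$ (handled above) and note that $p\ge 1$, $d>1$ guarantees all Gamma arguments are positive so no poles arise. The main ``obstacle,'' if any, is simply presenting the chain of distributional identities cleanly; the integral itself is elementary. I would structure the written proof as: (1) invoke Proposition~\ref{prop:EequalsMB}; (2) pass to the Gaussian model and identify the Beta variable; (3) evaluate the half-moment via the Beta function; (4) simplify the Gamma ratio and remark on the $p=d$ case.
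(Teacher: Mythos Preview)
Your argument is correct. The Gaussian representation $\tilde{g}=Y/\|Y\|$ with $Y\sim\mathcal{N}(0,I_d)$, the identification of $U=\|Y^{(1)}\|^2/\|Y\|^2$ as $\mathrm{Beta}(p/2,(d-p)/2)$, and the evaluation of $\E{U^{1/2}}$ via the Beta function all go through exactly as you describe, and the Gamma simplification is right. The edge case $p=d$ is also handled correctly.

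This is, however, a genuinely different route from the paper. The paper never invokes the Gaussian model or the Beta distribution; instead it computes the expectation directly on the sphere using hyperspherical coordinates. Restricting by symmetry to the nonnegative orthant, it observes that in those coordinates $\sqrt{\sum_{i=1}^p\tilde{g}_i^2}=\prod_{k=1}^{d-p}\sin(\varphi_k)$, so the surface integral factorizes completely into a product of one-dimensional integrals $\int_0^{\pi/2}\sin^m\theta\,d\theta$, each evaluated by the identity~\eqref{eq:sinusint}. A telescoping cancellation of Gamma factors then yields~\eqref{eq:expdecMB}. The paper's approach has the virtue of mirroring exactly the machinery used in the direct-search analysis (Theorem~\ref{thm:expdecDS}), keeping the two proofs parallel and self-contained within the hyperspherical framework already set up. Your approach is shorter and more probabilistic: it replaces an explicit multidimensional integral by a single textbook moment of a Beta law, at the cost of importing the Gaussian-to-sphere and chi-square-ratio-to-Beta facts from outside. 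Either is perfectly acceptable; yours would arguably be the cleaner standalone proof, while the paper's choice is natural given the surrounding context.
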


\begin{proof}
Our goal consists in evaluating the expression~\eqref{eq:EequalsMB}, i.e.
\[
    \Emb[p,d] = 
    \EE[\tilde{g} \sim \V_{1,n}]{\sqrt{\sum_{i=1,\dots,p} \tilde{g}_i^2}}. 
\]
Consider first the case $p=d$. Since $\tilde{g} \in \Sd$, we have
$\sqrt{\sum_{i=1}^p \tilde{g}^2_i}=\|\tilde{g}\|=1$, and thus
\[
	\Emb[d,d] = \EE[\tilde{g} \sim \V_{1,d}]{1} = 1.
\]
Noting that formula \eqref{eq:expdecMB} also returns $1$ when $p=d$ shows 
that it is valid in that case. Thus, in the rest of the proof, we 
suppose that $p<d$.

In order to compute the expectation, we restrict ourselves to vectors in the 
nonnegative orthant, i.e. we consider  
$R:=\{\tilde{g} \in \Sd\ |\ \tilde{g}_i \ge 0\ \forall i=1,\dots,d\}$. As in the 
proof of Theorem~\ref{thm:expdecDS}, we introduce hyperspherical coordinates
\begin{eqnarray*}
	x_d &= &\cos(\varphi_1), \\
	x_{d-1} &= &\sin(\varphi_1)\cos(\varphi_2), \\
	&\vdots & \\
	x_2 &= &\sin(\varphi_1) \cdots \sin(\varphi_{d-2})\cos(\varphi_{d-1}), \\
	x_1 &= &\sin(\varphi_1) \cdots \sin(\varphi_{d-2})\sin(\varphi_{d-1}),
\end{eqnarray*}
with surface element
\[
	dS = \sin^{d-2}(\varphi_1) \sin^{d-3}(\varphi_2) \cdots 
	\sin(\varphi_{d-2}) d\varphi_1 d\varphi_2 \cdots d\varphi_{d-1}.
\]
(As before, we use the reverse of the traditional ordering in order to create 
a simpler proof.) Then, for any $\tilde{g}$ in the nonnegative orthant, we 
have
\[
    \sqrt{\sum_{i=1}^{p} \tilde{g}^2_i} = \prod_{k=1}^{d-p} \sin(\varphi_k).
\]
Given that there are $2^d$ orthants in $R^d$, we obtain by symmetry that
\begin{equation*}
	\Emb[p,d] = \frac{2^d}{|\Sd|} 
    \int_{R} \left(\prod_{k=1}^{d-p} \sin(\varphi_k)\right) 
    \left(\prod_{k=1}^{d-2} \sin^{d-k-1}(\varphi_k)\right) 
    d\varphi_1 \cdots d\varphi_{d-1},
\end{equation*}
where $|\Sd|$ denotes the volume of the unit sphere in $\R^d$. By 
exploiting partial separability of this integral, we obtain
\begin{eqnarray}
    \Emb[p,d] &= &\frac{2^d}{|\Sd|} 
    \int_{R} \left(\prod_{k=1}^{d-p} \sin(\varphi_k)\right) 
    \left(\prod_{k=1}^{d-2} \sin^{d-k-1}(\varphi_k)\right) 
    d\varphi_1 \cdots d\varphi_{d-1}, \nonumber \\
    &= &\frac{2^d \ \pi }{2 |\Sd|} 
    \int_{R} \left(\prod_{k=1}^{d-p} \sin(\varphi_k)\right) 
    \left(\prod_{k=1}^{d-2} \sin^{d-k-1}(\varphi_k)\right) 
    d\varphi_1 \cdots d\varphi_{d-2}, \nonumber \\
    &= &\frac{2^d \ \pi}{2|\Sd|} 
    \left(\prod_{k=1}^{d-p} \int_{0}^{\pi/2}\sin^{d-k}(\theta) d\theta\right) 
    \left(\prod_{k=d-p+1}^{d-2} 
    \int_{0}^{\pi/2}\sin^{d-k-1}(\theta) d\theta\right). \nonumber\\
    \label{eq:edMBaux}
\end{eqnarray}

Recalling identity~\eqref{eq:sinusint}, we compute
\begin{eqnarray*}
    \prod_{k=d-p+1}^{d-2} \int_{0}^{\pi/2}\sin^{d-k-1}(\theta) d\theta 
    &= &\prod_{k=1}^{p-2} \int_{0}^{\pi/2}\sin^{p-k-1}(\theta) d\theta \\
    &= &\prod_{k=1}^{p-2} \int_{0}^{\pi/2} \sin^k(\theta) d\theta, \\
    &= &\prod_{k=1}^{p-2} \frac{\sqrt{\pi} 
    \ \Gamma(k/2+1/2)}{2 \ \Gamma(k/2+1)}, \\
    &= &\left(\frac{\sqrt{\pi}}{2}\right)^{p-2}\frac{1}{\Gamma(p/2)}.
\end{eqnarray*}
Also recalling~\eqref{eq:prodsinint} and substituting both 
into~\eqref{eq:edMBaux}, we find that
\begin{eqnarray*}
    \Emb[p,d] 
    &= &\frac{2^d \ \pi}{2 |\Sd|} 
    \left(\frac{\sqrt{\pi}}{2}\right)^{d-p} 
    \frac{\Gamma(p/2+1/2)}{ \Gamma(d/2+1/2)}
    \left(\frac{\sqrt{\pi}}{2}\right)^{p-2}\frac{1}{\Gamma(p/2)} \\
    &= &\frac{2 \ \pi^{d/2}}{|\Sd|} 
	\frac{\Gamma(p/2+1/2)}{ \Gamma(d/2+1/2)}
	\frac{1}{\Gamma(p/2)} \\
	&= &\frac{\Gamma(p/2+1/2)}{\Gamma(d/2+1/2)}
	\frac{\Gamma(d/2)}{\Gamma(p/2)},
\end{eqnarray*}
where the final line comes from the substitution 
$|\Sd|=2\pi^{d/2}/\Gamma(d/2)$. We have thus proved that~\eqref{eq:expdecMB} 
also holds in the case $p<d$, and the proof is complete.
\end{proof}

We examine several particular properties of the expression~\eqref{eq:expdecMB} 
in the next corollary. As in Section~\ref{ssec:edDS}, we leverage the fact 
that the expression~\eqref{eq:expdecMB} has a separable structure.

\begin{corollary}
\label{cor:p12MB}
	Let $d_1,d_2,p_1,p_2$ be integers greater than or equal to $1$ such that 
	$\max\{p_1,p_2\} \le \max\{d_1,d_2\}$. Then, the 
	following properties hold:
	\begin{enumerate}[(i)]
		\item $\Emb[1,d_1] 
		= \frac{1}{\sqrt{\pi}}\frac{\Gamma(d_1/2)}{\Gamma(d_1/2+1/2)}$;
		\item if $d_1>2$, then $\Emb[2,d_1] 
		= \frac{\sqrt{\pi}}{2} \frac{\Gamma(d_1/2)}{\Gamma(d_1/2+1/2)}$;
    	\item $\frac{\Emb[p_1,d_1]}{\Emb[p_2,d_1]} 
    	= \frac{\Emb[p_1,d_2]}{\Emb[p_2, d_2]}$;
    	\item $\frac{\Emb[p_1,d_1]}{\Emb[p_1,d_2]} 
    	= \frac{\Emb[p_2,d_1]}{\Emb[p_2,d_2]}$.
    \end{enumerate}
\end{corollary}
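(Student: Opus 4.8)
The plan is to mirror almost verbatim the proof of Corollary~\ref{cor:p12DS}, since the expected decrease formula~\eqref{eq:expdecMB} from Theorem~\ref{thm:expdecMB} is, like~\eqref{eq:expdecDS}, separable in $p$ and $d$ — and in fact cleaner, as it carries no trigonometric integral $\cI(p)$. First I would establish (i) by substituting $p_1=1$ into~\eqref{eq:expdecMB} and using the standard values $\Gamma(1)=1$ and $\Gamma(1/2)=\sqrt{\pi}$, which immediately yields $\Emb[1,d_1] = \frac{\Gamma(d_1/2)}{\sqrt{\pi}\,\Gamma(d_1/2+1/2)}$. For (ii), I would substitute $p=2$ into~\eqref{eq:expdecMB} and use $\Gamma(3/2)=\frac{\sqrt{\pi}}{2}$ and $\Gamma(1)=1$ (the hypothesis $d_1>2$ guarantees $p<d_1$ so that Theorem~\ref{thm:expdecMB} applies in the non-trivial regime), giving $\Emb[2,d_1] = \frac{\sqrt{\pi}}{2}\frac{\Gamma(d_1/2)}{\Gamma(d_1/2+1/2)}$.

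For (iii) and (iv), I would introduce the factorization suggested by~\eqref{eq:expdecMB}: for any integers $p,d\geq 1$, define
\[
	E(p) = \frac{\Gamma(p/2+1/2)}{\Gamma(p/2)}, \qquad
	\hat{E}(d) = \frac{\Gamma(d/2)}{\Gamma(d/2+1/2)},
\]
so that $\Emb[p,d] = E(p)\hat{E}(d)$. Then the $d$-dependent factor cancels in the ratio $\Emb[p_1,d]/\Emb[p_2,d] = E(p_1)/E(p_2)$, which is visibly independent of $d$, proving (iii); symmetrically, the $p$-dependent factor cancels in $\Emb[p,d_1]/\Emb[p,d_2] = \hat{E}(d_1)/\hat{E}(d_2)$, independent of $p$, proving (iv). One small bookkeeping point is to make sure the $p=d$ boundary case is covered, but Theorem~\ref{thm:expdecMB} (together with the remark that~\eqref{eq:expdecMB} also returns $1$ at $p=d$) already licenses the factorization for all admissible pairs, so no separate argument is needed.

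I do not anticipate a genuine obstacle here: the entire content is the separability of~\eqref{eq:expdecMB}, which Theorem~\ref{thm:expdecMB} hands us directly, plus evaluation of $\Gamma$ at half-integers. The only thing to be careful about is stating the hypotheses precisely — in particular that $\max\{p_1,p_2\}\le\max\{d_1,d_2\}$ is exactly what is needed for all four of $\Emb[p_i,d_j]$ to be defined via Definition~\ref{def:Epn} — and ensuring the $p<d$ versus $p=d$ cases of Theorem~\ref{thm:expdecMB} are both invoked legitimately so that the clean closed form is available throughout.
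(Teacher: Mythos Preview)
Your proposal is correct and matches the paper's approach exactly: the paper does not even write out a separate proof for Corollary~\ref{cor:p12MB}, merely noting before the statement that ``we leverage the fact that the expression~\eqref{eq:expdecMB} has a separable structure'' as in Section~\ref{ssec:edDS}, which is precisely the argument you outline. One minor remark: your justification for the hypothesis $d_1>2$ in (ii) (that it forces $p<d_1$) is not quite the operative constraint, since Theorem~\ref{thm:expdecMB} already establishes~\eqref{eq:expdecMB} for all $p\le d$ including $p=d$; the restriction is inherited verbatim from Corollary~\ref{cor:p12DS} and is in fact slightly stronger than necessary here.
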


Notice that $\Emb[1,d]=\Eds[1,d]$ for any $d$, which should not come as 
a surprise since Algorithms~\ref{alg:DS} and~\ref{alg:MB} perform 
identically for $p=1$. Comparing $\Emb[2,d]$ and $\Eds[2,d]$, however, 
we observe that
\[
	\frac{\sqrt{2}}{\sqrt{\pi}} \approx 0.797 
	< 0.886 \approx \frac{\sqrt{\pi}}{2},
\]
implying that Algorithm \ref{alg:MB} is providing a higher expected decrease 
than Algorithm~\ref{alg:DS} when a two-dimensional subspace is used.

We end this subsection with asymptotic results akin to 
Corollary~\ref{cor:p12limitsDS}, that follows from combining 
Lemma~\ref{lem:asympt} with Corollary~\ref{cor:p12MB}.

\begin{corollary}
\label{cor:p12limitsMB}
	Under the same assumptions as Corollary~\ref{cor:p12MB}, 
	asymptotically 
	\begin{equation*}
		\Emb[1,d_1]
    	\rightarrow 
    	\frac{\sqrt{2}}{\sqrt{\pi}\sqrt{d_1}} 
    	~\mbox{as}~d_1\rightarrow \infty,
    \end{equation*}
    and
	\begin{equation*}
		\Emb[2,d_1] 
    	\rightarrow 
    	\frac{\sqrt{\pi}}{\sqrt{2}\sqrt{d_1}} 
    	~\mbox{as}~d_1\rightarrow \infty.
	\end{equation*}
\end{corollary}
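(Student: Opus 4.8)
The plan is to reduce everything to a single scalar asymptotic already established in Lemma~\ref{lem:asympt} and then multiply through by the constants coming from Corollary~\ref{cor:p12MB}. First I would invoke Corollary~\ref{cor:p12MB}(i), which gives the exact identity $\Emb[1,d_1] = \tfrac{1}{\sqrt{\pi}}\,\tfrac{\Gamma(d_1/2)}{\Gamma(d_1/2+1/2)}$, valid for every $d_1\ge 1$. The right-hand side is a fixed constant ($1/\sqrt{\pi}$, independent of $d_1$) times the ratio $\Gamma(d_1/2)/\Gamma(d_1/2+1/2)$, so the limiting behaviour is governed entirely by that ratio.

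The next step is to apply Lemma~\ref{lem:asympt}, which states precisely that $\Gamma(d_1/2)/\Gamma(d_1/2+1/2)\to \sqrt{2}/\sqrt{d_1}$ as $d_1\to\infty$. Substituting this into the identity from Corollary~\ref{cor:p12MB}(i) yields
\[
    \Emb[1,d_1] = \frac{1}{\sqrt{\pi}}\,\frac{\Gamma(d_1/2)}{\Gamma(d_1/2+1/2)} \;\longrightarrow\; \frac{1}{\sqrt{\pi}}\cdot\frac{\sqrt{2}}{\sqrt{d_1}} = \frac{\sqrt{2}}{\sqrt{\pi}\sqrt{d_1}},
\]
which is the first claimed asymptotic. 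For the second, I would repeat the argument verbatim with Corollary~\ref{cor:p12MB}(ii): for $d_1>2$ we have $\Emb[2,d_1] = \tfrac{\sqrt{\pi}}{2}\,\tfrac{\Gamma(d_1/2)}{\Gamma(d_1/2+1/2)}$, so the same substitution gives
\[
    \Emb[2,d_1] \;\longrightarrow\; \frac{\sqrt{\pi}}{2}\cdot\frac{\sqrt{2}}{\sqrt{d_1}} = \frac{\sqrt{\pi}\sqrt{2}}{2\sqrt{d_1}} = \frac{\sqrt{\pi}}{\sqrt{2}\sqrt{d_1}},
\]
after simplifying $\sqrt{2}/2 = 1/\sqrt{2}$.

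There is essentially no obstacle here: the corollary is a direct corollary in the literal sense, obtained by composing an exact formula with a one-line asymptotic. The only points requiring a modicum of care are keeping the multiplicative constants straight (in particular the algebraic identity $\tfrac{\sqrt{\pi}\sqrt{2}}{2} = \tfrac{\sqrt{\pi}}{\sqrt{2}}$ used in the second case) and noting that the restriction $d_1>2$ appearing in Corollary~\ref{cor:p12MB}(ii) is harmless for an $d_1\to\infty$ statement. No new estimates or manipulations of Gamma functions beyond Lemma~\ref{lem:asympt} are needed.
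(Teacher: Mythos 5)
Your argument is exactly the paper's: the corollary is stated as following from combining Lemma~\ref{lem:asympt} with Corollary~\ref{cor:p12MB}(i)--(ii), which is precisely what you do, and your constants (including $\tfrac{\sqrt{\pi}}{2}\cdot\sqrt{2}=\tfrac{\sqrt{\pi}}{\sqrt{2}}$) check out. Correct, and the same approach as the paper.
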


\subsection{Expected decrease per function evaluation}
\label{ssec:MBperfun}

We now examine the expected decrease guarantee of Algorithm~\ref{alg:MB} by 
taking its function evaluation cost into account. While Algorithm~\ref{alg:DS} 
was evaluating $2p$ new points per iteration, Algorithm \ref{alg:MB} only
evaluates $p+1$ new points per iteration. Indeed, the construction of the 
simplex gradient requires $p+1$ function values but only $p$ new ones since 
that of the incumbent solution $x^k$ is re-used from the past iteration. One final evaluation is 
used in line 6 of Algorithm~\ref{alg:MB}, so the total amounts 
to $p+1$ new evaluations. As a result, we define 
\begin{equation}
\label{eq:eMBF}
	\Emb^F[p,d] = \frac{\Emb[p,d]}{p+1}
\end{equation}
for $p\ge 2$, and investigate its behavior as $p$ varies in 
Theorem~\eqref{eq:expdecperfunMB} (the case $p=1$ will be discussed separately). 

\begin{theorem}\label{thm:expdecperfunMB}
	Under the same assumptions as Theorem~\ref{thm:expdecMB}, suppose further 
	than $d> 2$. Then, for any $p=2,\dots,d-1$,
	\begin{equation}
	\label{eq:expdecperfunMB}
        \frac{\Emb[p,d]}{p+1} >  \frac{\Emb[p+1,d]}{p+2}.
	\end{equation}
\end{theorem}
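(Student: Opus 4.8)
The plan is to reduce \eqref{eq:expdecperfunMB} to a self-improving estimate on a Wallis ratio. Clearing denominators, \eqref{eq:expdecperfunMB} is equivalent to
\[
  \frac{\Emb[p,d]}{\Emb[p+1,d]} > \frac{p+1}{p+2},
\]
and by Theorem~\ref{thm:expdecMB} the left-hand side equals $\dfrac{\Gamma(p/2+1/2)^2}{\Gamma(p/2)\,\Gamma(p/2+1)}$, so it does not depend on $d$. Introducing the Wallis integrals $W_m:=\int_0^{\pi/2}\sin^m\theta\,d\theta$ and using the identity \eqref{eq:sinusint} with exponents $m=p$ and $m=p-1$, one verifies directly that this ratio is exactly $a_p := W_p/W_{p-1}$. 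Hence the whole statement reduces to showing $a_p > \tfrac{p+1}{p+2}$ for $p=2,\dots,d-1$.

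I would then record two elementary facts about $(W_m)$. First, the standard recursion $W_m=\tfrac{m-1}{m}W_{m-2}$ (integration by parts), which yields $a_m a_{m+1} = W_{m+1}/W_{m-1} = \tfrac{m}{m+1}$. Second, strict log-convexity of $m\mapsto W_m$, i.e.\ $W_m^2 < W_{m-1}W_{m+1}$: this follows from Cauchy--Schwarz (or Hölder) applied to $W_m = \int_0^{\pi/2}(\sin\theta)^{(m-1)/2}(\sin\theta)^{(m+1)/2}\,d\theta$, the inequality being strict because $(\sin\theta)^{(m-1)/2}$ and $(\sin\theta)^{(m+1)/2}$ are not proportional. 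The second fact says precisely that $(a_m)$ is strictly increasing.

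Combining these: from $a_{p-1}<a_p$ and $a_{p-1}a_p=\tfrac{p-1}{p}$ we get $a_{p-1}^2<\tfrac{p-1}{p}$, hence $a_p = \tfrac{(p-1)/p}{a_{p-1}} > \sqrt{\tfrac{p-1}{p}}$. It then remains to check $\sqrt{\tfrac{p-1}{p}}\ge\tfrac{p+1}{p+2}$, which after squaring and clearing denominators is $(p-1)(p+2)^2 \ge p(p+1)^2$, i.e.\ $p^2-p-4 > 0$; this holds for all integers $p\ge 3$. The single leftover case $p=2$ is treated by hand from the explicit values in Corollary~\ref{cor:p12MB}: $\Emb[2,d]/\Emb[3,d] = (\sqrt\pi/2)/(2/\sqrt\pi) = \pi/4 > 3/4 = \tfrac{p+1}{p+2}$, using $\pi>3$. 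Chaining these gives $a_p>\tfrac{p+1}{p+2}$ for every $p=2,\dots,d-1$, which is the claim.

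The main obstacle is the sharpness needed in the lower bound on $a_p$. A naive application of Gautschi's inequality \eqref{eq:gautschiex} to $\Gamma(p/2+1/2)/\Gamma(p/2)$ only gives $a_p > \tfrac{p-1}{p}$, and $\tfrac{p-1}{p} < \tfrac{p+1}{p+2}$, so this falls just short; a direct Gamma-function estimate would need a refinement of Gautschi that the paper has not introduced. Recognizing $a_p$ as a Wallis ratio, so that the exact relation $a_p a_{p+1}=\tfrac{p}{p+1}$ plus monotonicity of $(a_p)$ can be bootstrapped into $a_p>\sqrt{(p-1)/p}$, is what closes the gap -- at the modest cost of handling $p=2$ separately.
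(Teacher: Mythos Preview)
Your proof is correct and takes a genuinely different route from the paper's. Both arguments start with the same reduction to
\[
  \frac{\Gamma(p/2+1/2)^2}{\Gamma(p/2)\,\Gamma(p/2+1)} > \frac{p+1}{p+2},
\]
and both must handle $p=2$ separately (with the identical check $\pi/4>3/4$). The difference is in how the Gamma ratio is bounded below for $p\ge 3$. The paper invokes Kershaw's sharpening of Gautschi's inequality,
\[
  \frac{\Gamma(x+1)}{\Gamma(x+s)} < \bigl(x-\tfrac12+(s+\tfrac14)^{1/2}\bigr)^{1-s},
\]
applied with $x=p/2$, $s=1/2$, to obtain $\dfrac{\Gamma(p/2+1/2)^2}{\Gamma(p/2)\Gamma(p/2+1)} > \dfrac{p}{p+\sqrt{3}-1}$, which suffices once $p\ge 3$. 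You instead recognise the ratio as the Wallis quotient $a_p=W_p/W_{p-1}$, then combine the exact recursion $a_{p-1}a_p=(p-1)/p$ with the strict log-convexity $W_m^2<W_{m-1}W_{m+1}$ (Cauchy--Schwarz) to get $a_p>\sqrt{(p-1)/p}$, and finish with the elementary check $(p-1)(p+2)^2>p(p+1)^2$ for integers $p\ge 3$. Your argument is more self-contained: it uses only integration by parts and Cauchy--Schwarz, whereas the paper introduces Kershaw's inequality specifically for this step. The paper's approach is shorter if one is willing to cite that external estimate. Your closing remark about plain Gautschi falling just short is exactly the reason the paper reaches for Kershaw.
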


\begin{proof}
To obtain the desired result, it suffices to prove that
\[
	\frac{\Emb[p,d]}{\Emb[p+1,d]} 
	= 
	\frac{\Gamma(p/2+1/2)^2}{\Gamma(p/2) \Gamma(p/2+1)} 
	> \frac{p+1}{p+2}.
\]
To this aim, we require a tighter version of Gautschi's 
inequality than the one used to prove Lemma~\ref{lem:intdecrDS}.  
By Kershaw's extension to Gautschi's inequality~\cite{DKershaw_1983}, for all $x>0$ and $s\in(0,1)$, it holds that
\begin{equation}
\label{eq:superGautschi}
    \left(x+s/2\right)^{1-s} 
    < \frac{\Gamma(x+1)}{\Gamma(x+s)} 
    < \left(x-1/2+(s+1/4)^{1/2}\right)^{1-s}.
\end{equation}

Applying $x=p/2$ and $s=1/2$ in equation~\eqref{eq:superGautschi} yields
\[
    \frac{\Gamma(p/2+1)}{\Gamma(p/2+1/2)} 
    < \frac{\sqrt{p+\sqrt{3}-1}}{\sqrt{2}}.
\]
Using $\Gamma(p/2+1)=(p/2)\Gamma(p/2)$, we also have
\[
    \frac{\Gamma(p/2)}{\Gamma(p/2+1/2)} 
    < \frac{\sqrt{2}\sqrt{p+\sqrt{3}-1}}{p}.
\]
Inverting both inequalities and multiplying the results shows that 
\[
    \frac{\Gamma(p/2+1/2)^2}{\Gamma(p/2)\Gamma(p/2+1)} 
    > \frac{p}{p+\sqrt{3}-1}.
\]
We can easily verify that $\frac{p}{p+\sqrt{3}-1} \geq \frac{p+1}{p+2}$ 
whenever $p\geq \sqrt{3}+1 \approx 2.73$. The case of $p=2$ is easily 
checked, as
\[
	\frac{\Emb[2,d]}{\Emb[3,d]}=\frac{\pi}{4}>\frac{2+1}{3+1},
\]
and therefore~\eqref{eq:expdecperfunMB} holds.
\end{proof}

The result of Theorem~\ref{thm:expdecMB} leads to similar conclusions than 
that of Theorem~\ref{thm:expdecperfunDS}, in the sense that using 
low-dimensional subspace dimension leads to better expected decrease 
guarantees up to $p\ge 2$. We comment thereafter on other settings.

\paragraph{The case $p=1$:}
The inequality~\eqref{eq:expdecperfunMB} does not apply for $p=1$, as 
\[
	\frac{\Emb[1,d]}{\Emb[2,d]} = \frac{2}{\pi} < \frac{1+1}{2+1},
\]
seemingly indicating that $p=2$ is the best choice. However, when $p=1$, 
the simplex gradient is necessarily equal to $b_1$ or $-b_1$. In the former
case, Algorithm~\ref{alg:MB} will not require an additional value on line 6, 
since the value at $z+\delta u=z+\delta b_1$ was already computed and used 
to form the simplex gradient. As a result, the average number of function 
evaluations used when $p=1$ is $3/2$ (similar to the case of opportunistic 
polling discussed in Section~\ref{ssec:DSperfun}). By extending~\eqref{eq:eMBF} 
to $p=1$ using this cost, we obtain
\begin{equation*}
	\Emb^F[1,d] := \frac{\Emb[1,d]}{3/2} 
	= \frac{2}{3\sqrt{\pi}}\frac{\Gamma(d/2)}{\Gamma(d/2+1/2)} 
	> \Emb^F[2,d],
\end{equation*}
suggesting that one-dimensional subspaces also provide 
a better return on investment in model-based approaches based on simplex 
gradients, i.e., linear models of the function.

\paragraph{Parallel processing:}
Similarly to the direct-search case, we can consider the situation where $c$ 
parallel cores are used to compute distinct function evaluations. This 
paradigm reduces the per-iteration cost of Algorithm~\ref{alg:MB} to 
$\lceil p/c \rceil +1$, where the gain is necessarily achieved only on the 
evaluations used to form the simplex gradient (the final evaluation on 
Line 6 must be done after the others). Then, assuming $p/c$ is an integer, 
we obtain
\begin{equation*}
	\frac{\Emb[p,d]}{\lceil {p}/c\rceil + 1} 
	= c\frac{\Emb[{p},d]}{p + c}
    \quad\mbox{and}\quad
    \frac{\Emb[p+c,d]}{\lceil (p+c)/c\rceil + 1} 
    = c\frac{\Emb[p+c,d]}{p + 1 + c}.
\end{equation*}
Although the result of Theorem~\ref{thm:expdecperfunMB} does not directly 
apply to this new quantity (unless $c=1$), a simple numerical inspection
confirms that $\frac{\Emb[p,d]}{\lceil {p}/c\rceil + 1}$ is maximized for 
$p=c$ for all values $c \in \{1, 2, \ldots, 256\}$ and 
$p \in \{c, 2c, \ldots, 100c\}$.  This strongly suggests that the expected 
decrease per unit of work is maximized when you use the smallest subspace 
that uses all cores, as in the direct-search setting. However, this maximum 
is not uniquely obtained, since when $d\ge 4$ and $c=2$, we have
\begin{equation*}
    \frac{\Emb[2,d]}{\lceil 2/2\rceil + 1} 
    = \frac{\sqrt{\pi}}{4} \cdot \frac{\Gamma(d/2)}{\Gamma(d/2+1/2)} 
    = \frac{\Emb[4,d]}{\lceil 4/2\rceil + 1},
\end{equation*}
hence both $p=2$ and $p=4$ achieve the maximum expected decrease per unit of 
work.


\section{Numerical estimation of expected decrease}
\label{sec:num}

In Sections \ref{sec:theoDS} and \ref{sec:theoMB}, we showed that the expected 
decrease per function evaluation is strictly decreasing as a function of $p$.  Considering 
Corollaries~\ref{cor:p12DS} and \ref{cor:p12MB}, we see that the 
expected decrease improves from $p=1$ to $p=2$.  Indeed, 
\[
	\frac{\Emb[2,d]}{\Emb[1,d]} = \frac{\pi}{2}  > 1 
        \quad \mbox{and} \quad 
	\frac{\Eds[2,d]}{\Eds[1,d]} = \sqrt{2} > 1.
\]
However, the expected decrease per function evaluation actually worsens from 
$p=1$ to $p=2$.  Indeed,
\[
	\frac{\Eds^F[2,d]}{\Eds^F[1,d]} = \sqrt{2}/2 < 1 
        \quad \mbox{and} \quad 
	\frac{\Emb^F[2,d]}{\Emb^F[1,d]} = \frac{\pi}{4} < 1.
\]
Computing these ratios becomes increasingly cumbersome as $p$ increases. In 
this section, we thus investigate the behavior of the expected decrease 
quantities $\Eds$, $\Emb$, $\Eds^F$ and $\Emb^F$ numerically, by way of to 
Monte Carlo simulations.

Algorithm~\ref{alg:MCestim} describes our estimation procedure applied to 
evaluate the expected decrease quantities. Note that it samples both a 
vector $g$ uniformly distributed on the unit sphere and a random basis 
$B$ for the subspace, as in the original definition~\eqref{eq:Epn}.  
(As such, we also numerically verify the results in Proposition \ref{prop:EequalsDS} and \ref{prop:EequalsMB}.) 
The estimated quantity is obtained by averaging the decrease formulas for 
every sample $(g,B)$. In our subsequent experiments, we use 
$N_{\tt sims} = 10^{4}$ samples.

\begin{algorithm}[ht!]
\caption{Monte Carlo estimation of expected decrease {\tt MCestim})}
\label{alg:MCestim}
\begin{algorithmic}[1]
\Procedure{MCtest}{$N_{\tt sims}$, $b$, $p$, \algoclass{}}
    \State \% $N_{\tt sims}$ number of simulations to run, positive integer
    \State \% $d$ problem dimension, positive integer
    \State \% $p$ subspace dimension, $p \in \{1, 2, \ldots, d\}$
    \State \% \algoclass{}: DFO step on subspaces
    \algoclass{} $\in$ \{\DS{},\MB{}\}
    \For{$k = 1$ to $N_{\tt sims}$}
    \State Randomly select $g \in \Sd$
    \State Randomly select a subspace of dimension $p$ with orthonormal basis 
    $$B = [b_1, b_2, ..., b_p]$$
    \If{\algoclass{}=\DS{}}
    	\State Set $D(k) = \max\{ g^\top d : d = \pm b_i, i = 1, 2, \ldots, p\}$
    \Else
    	\State Compute the subspace gradient $\hat{g} = B^\top g$
   		\State Set $D(k) = (-B(\hat{g}/\|\hat{g}\|))^\top g$
    \EndIf 
    \EndFor
    \State Return $\sum_{k=1}^{N_{\tt sims}}D(k)/N_{\tt sims}$ as an estimate 
    of $\mathbb{E}_{\algoclass}[p,d]$
\EndProcedure
\end{algorithmic}
\end{algorithm}

\subsection{Direct-search case}
\label{ssec:numDS}

We first look at the results for estimating $\Eds$. Note that we can compute 
the integral symbolically for low values of $p$ using 
Mathematica~\cite{Mathematica}, yielding
\begin{small}
\begin{equation*}
	\begin{array}{lllll}
    	\Eds[3,d] 
    	&= &\frac{\Gamma(d/2)}{\Gamma(d/2+1/2)} 
    	\left[\frac{12\arctan(\sqrt{2})+3\arctan(460\sqrt{2}/329)}
    	{2\sqrt{2}(\sqrt{\pi})^3}\right]
    	&\approx &0.938 \frac{\Gamma(d/2)}{\Gamma(d/2+1/2)}, \\
    	\Eds[4,d] 
    	&= &\frac{\Gamma(d/2)}{\Gamma(d/2+1/2)} 
    	\left[\frac{12\sqrt{2}\arctan(\frac{1}{2\sqrt{2}})}
    	{(\sqrt{\pi})^3}\right]
    	&\approx &1.036 \frac{\Gamma(d/2)}{\Gamma(d/2+1/2)}.
    \end{array}
\end{equation*}
\end{small}
Further estimation of the Gamma functions leads to the approximations
\begin{equation*}
	\frac{\Eds^F[3,d]}{\Eds^F[2,d]} 
	\approx 0.784 
	\quad \mbox{and} \quad
	\frac{\Eds^F[4,d]}{\Eds^F[3,d]}
	\approx 0.828.	
\end{equation*}
These values suggest that the gain in expected decrease between $p$ and 
$p+1$ reduces as the value of $p$ increases.


Numerical estimations of the expected decrease for direct-search are given in 
Figure~\ref{fig:DS}.
\footnote{In all figures it should be recognized that lines adjoining points are for visualization only.  The values of $d$ and $p$ are always integers.}
Figure~\ref{fig:DSvarydimension} presents the output of 
Algorithm~\ref{alg:MCestim} (with \algoclass{}=\DS{}) for varying dimensions 
$d\in\{8,16,32,\ldots,1024\}$ using subspace dimension $p\in \{1, 2, d/2, d\}$. 
For $p\in\{1,2\}$ we superimpose the exact formula for the expected decrease 
as given by Corollary \ref{cor:p12DS}.  For large values of $d$, 
floating-point and overflow errors occur when evaluating 
$\Gamma(d/2)/\Gamma(d/2+1/2)$, thus we only plot the values from 
Corollary~\ref{cor:p12DS} up to occurrence of these errors. For comparison, 
we also show the large-$d$ asymptotic results from 
Corollary~\ref{cor:p12limitsDS}. We note that the Monte-Carlo simulation 
aligns nearly perfectly with the formulas for $p \in \{1,2\}$, while the 
large-$d$ asymptotics are essentially indistinguishable from the simulations 
for $d \ge 100$. 

Figure~\ref{fig:DSvarysubspacesize} shows the output of 
Algorithm~\ref{alg:MCestim} (with \algoclass{}=\DS{}) for varying subspace 
size $p\in\{1,2,3,4,5,10,20,50,100,200,500,1000\}$ and fixed dimension 
$d=1000$. As expected, we observe that choosing $p=1$ provides the 
worst expected decrease and that $p=d$ leads to the best expected decrease. 
Note also that the expected decrease diminishes as $d$ increases.

\begin{figure}[ht]
  \centering
  \begin{subfigure}[b]{0.45\textwidth}
    \includegraphics[width=\textwidth]{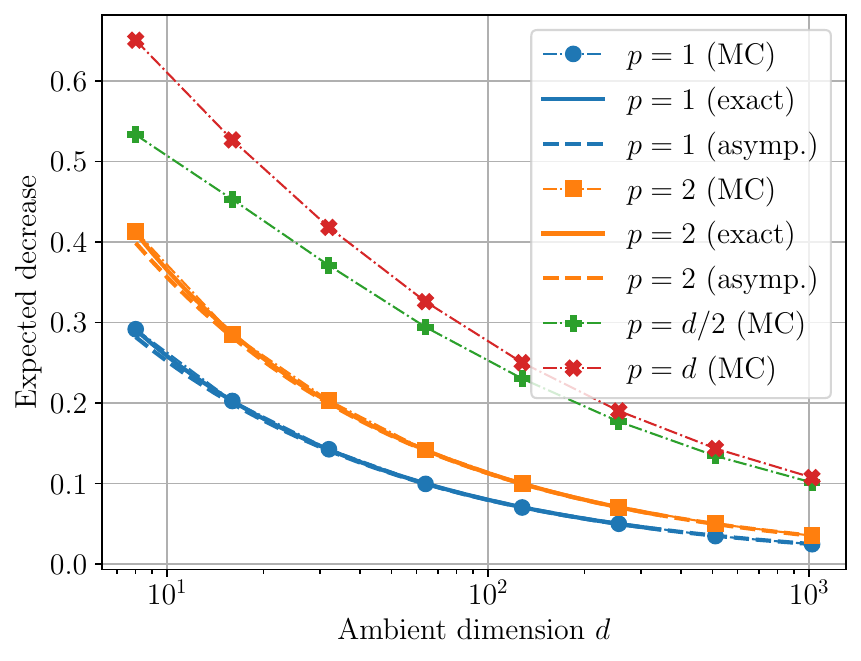}
    \caption{Varying dimension $d$}\label{fig:DSvarydimension}
  \end{subfigure}
  ~
  \begin{subfigure}[b]{0.45\textwidth}
    \includegraphics[width=\textwidth]{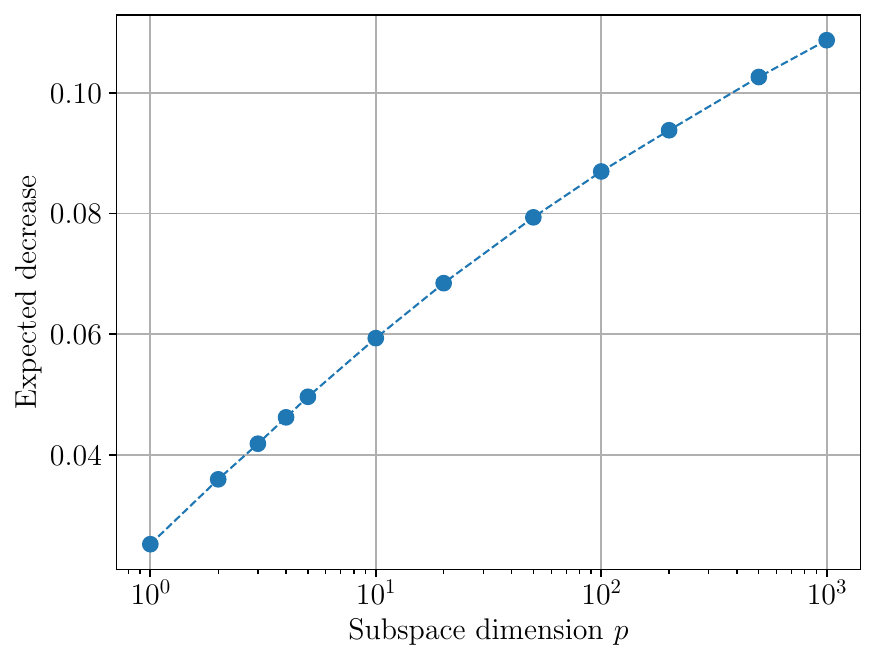}
    \caption{Varying subspace dimension $p$}\label{fig:DSvarysubspacesize}
  \end{subfigure}
  \caption{Expected decrease ($\Eds[p,d]$) versus average decrease based on 
  Monte Carlo simulation for varying dimension (a) and subspace dimension (b).
  \newline
  {\footnotesize (a) Lines with ``(MC)'' are the Monte Carlo simulation results, 
  ``(exact)'' is the result from Theorem \ref{thm:expdecDS} and ``(asymp.)'' is 
  the large-$d$ asymptotic result from Corollary~\ref{cor:p12limitsDS}.}
  \newline
  {\footnotesize(b) Ambient dimension $d=1000$.}
  }
  \label{fig:DS}
\end{figure}

In Theorem \ref{thm:expdecperfunDS}, we showed that the expected decrease per 
function evaluation $\Eds^F[p,d]$ was strictly decreasing as a function of the 
subspace dimension $p$. In Figure \ref{fig:DSratio}, we plot the expected 
decrease per unit work for varying dimensions and varying subspace dimensions. 
Those results confirm our theoretical findings, in that setting $p=1$ gives 
the largest expected decrease per function evaluation. Note that the gap between 
$\Eds^F[p,d]$ and $\Eds^F[p,d]$ is the largest for $p=1$, and that it decreases 
as $p$ increases.

\begin{figure}[ht]
  \centering
  \begin{subfigure}[b]{0.45\textwidth}
    \includegraphics[width=\textwidth]{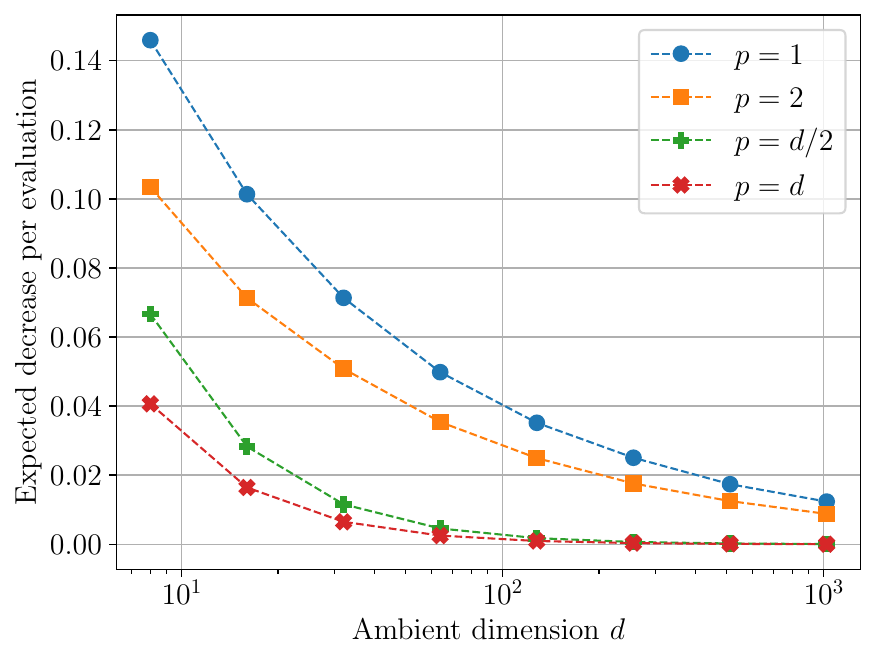}
    \caption{Varying dimension $d$}\label{fig:DSratiovarydimension}
  \end{subfigure}
  ~
  \begin{subfigure}[b]{0.45\textwidth}
    \includegraphics[width=\textwidth]{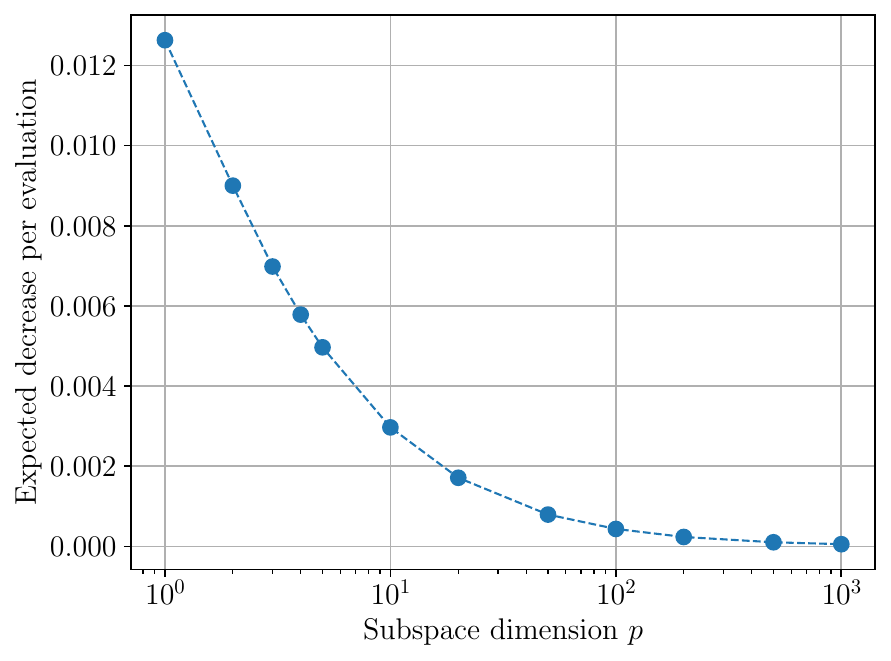}
    \caption{Varying subspace dimension $p$}\label{fig:DSratiovarysubspacesize}
  \end{subfigure}
  \caption{Expected decrease per function evaluation 
  $\Eds^F[p,d]$~\eqref{eq:expdecDS} versus average decreased based on Monte 
  Carlo simulation for varying dimension (a) and subspace dimension (b).
  }
  \label{fig:DSratio}
\end{figure}

\subsection{Model-based case}
\label{ssec:mb}

We now discuss the output of Algorithm~\ref{alg:MCestim} using 
\algoclass{}=\MB{}. Figures~\ref{fig:MB} and~\ref{fig:MBratio} present results 
analogous to that of Figures~\ref{fig:DS} and \ref{fig:DSratio}.    

\begin{figure}[ht]
  \centering
  \begin{subfigure}[b]{0.45\textwidth}
    \includegraphics[width=\textwidth]{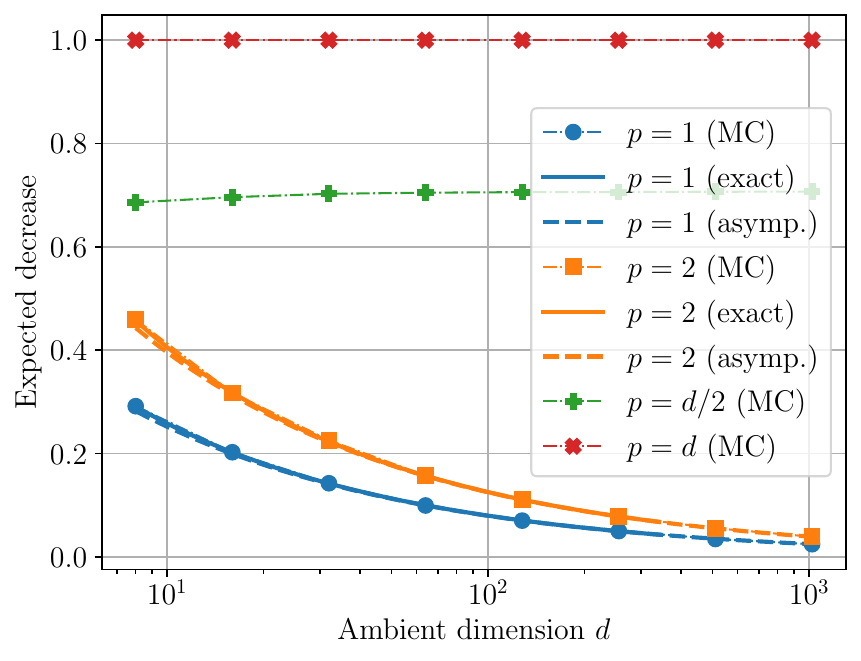}
    \caption{Varying dimension $d$}
  \end{subfigure}
  ~
  \begin{subfigure}[b]{0.45\textwidth}
    \includegraphics[width=\textwidth]{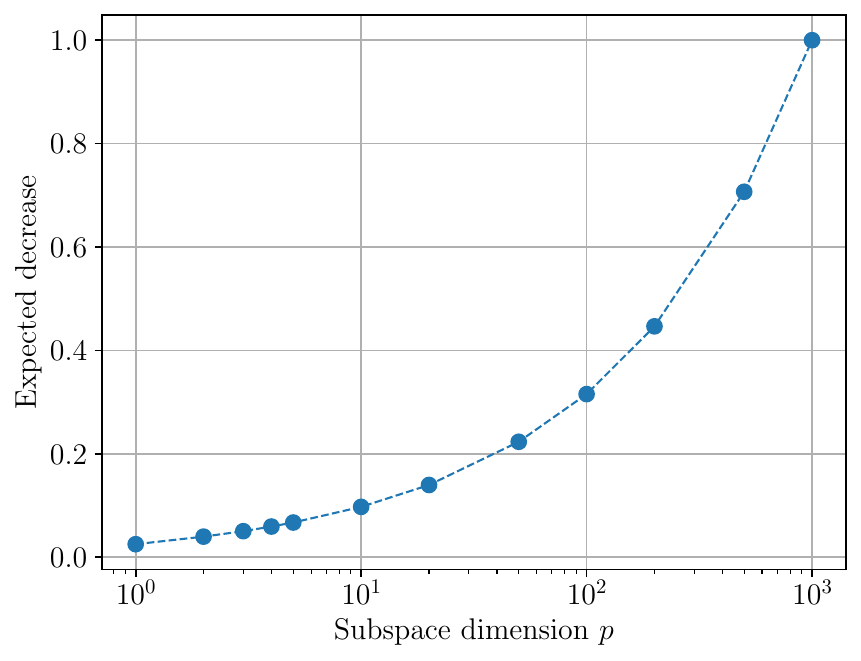}
    \caption{Varying subspace dimension $p$}
  \end{subfigure}
  \caption{Expected decrease ($\Emb[p,d]$) versus the average decreased based on Monte 
  Carlo simulation for varying dimension (a) and subspace dimension (b).}
  \label{fig:MB}
\end{figure}

\begin{figure}[ht]
  \centering
  \begin{subfigure}[b]{0.45\textwidth}
    \includegraphics[width=\textwidth]{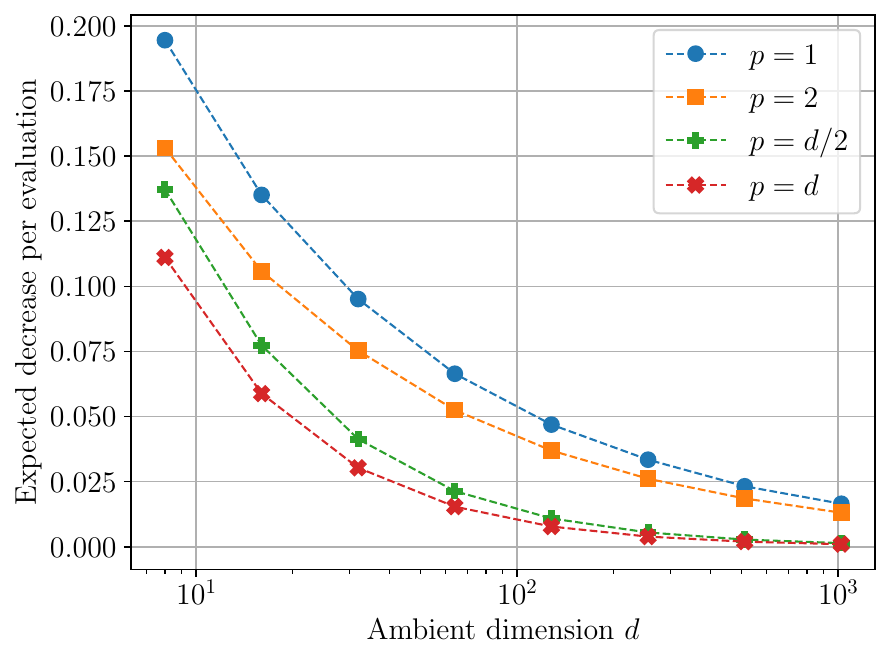}
    \caption{Varying dimension $d$}
    \label{fig:MBratiovarydimension}
  \end{subfigure}
  ~
  \begin{subfigure}[b]{0.45\textwidth}
    \includegraphics[width=\textwidth]{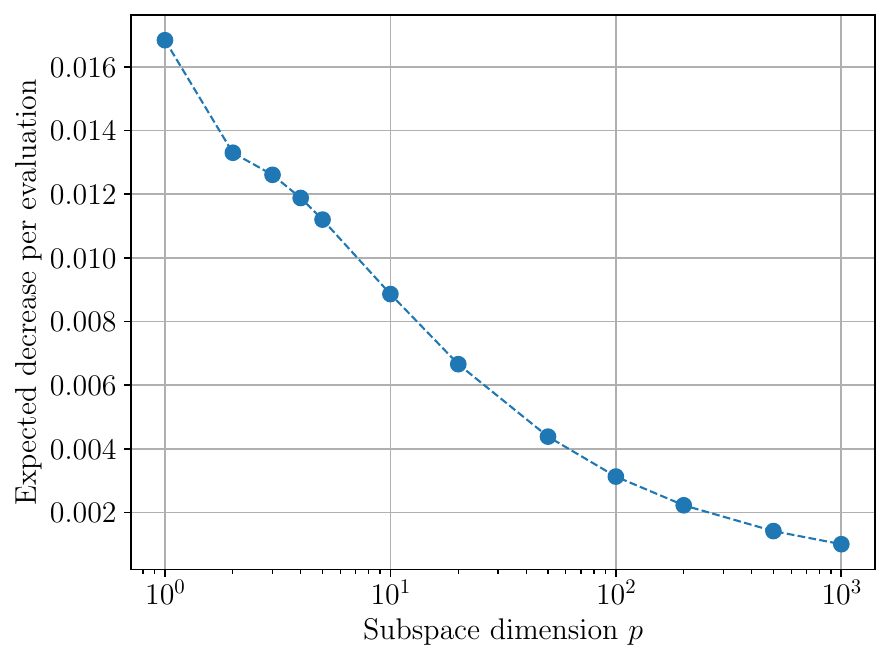}
    \caption{Varying subspace dimension $p$}
    \label{fig:MBratiovarysubspacesize}
  \end{subfigure}
  \caption{Expected decrease per unit work $\Emb^F[p,d]$ versus the average 
  based on Monte Carlo simulation for varying dimension (a) and subspace 
  dimension (b).}
  \label{fig:MBratio}
\end{figure}

As in the direct-search case, we match exact results for $p \in \{1,2\}$ (see 
Corollary~\ref{cor:p12MB}) and large $d$-asymptotics (see 
Corollary~\ref{cor:p12limitsMB}) quite closely. We also observe empirically 
that $p=1$ is worst in terms of expected decrease but best in terms of 
expected decrease per function evaluation (with our choice of 
$\Emb^F[1,d]=\Emb[1,d]/(3/2)$ explained in Section~\ref{ssec:MBperfun}). 
Finally, we see from Figure~\ref{fig:MBratiovarysubspacesize} that the gap 
between $p=1$ and $p=2$ is the largest among all consecutive values of $p$.

\section{Discussion}
\label{sec:conc}

We have established expected decrease formulae for derivative-free 
iterations using random subspaces when applied to linear functions. As 
explained in Section~\ref{ssec:expdec}, our analysis can be employed to show 
expected decrease guarantees for more general classes of smooth functions that 
admit a linear model approximation.
We have established that performing iterations of derivative-free algorithms 
in randomly generated subspaces is more beneficial as the dimension of the 
subspaces decreases. This arguably surprising result arises from properties of 
the uniform distribution over subspaces, and goes some way to understanding the strong empirical performance of low-dimensional subspace approximations (e.g.~in \cite{SGratton_CWRoyer_LNVicente_ZZhang_2015,LRoberts_CWRoyer_2023}).

Extending our analysis to handle quadratic models is a natural continuation of 
this paper, that poses a number of challenges related to the theory of random 
quadratic functions. Nevertheless, such results seem necessary to understand 
derivative-free methods that rely on quadratic models and beyond. In addition, 
elaborate implementations of derivative-free algorithms can reuse past 
evaluations to produce better trial points, which introduces non-trivial 
dependencies between iterations. Finally, we expect our theory to apply in 
the case of stochastic function evaluations, provided those satisfy common 
probabilistic properties appearing in the literature.

\bibliographystyle{plain} 
\bibliography{refsLDS}

\begin{thebibliography}{10}

\bibitem{CAudet_WHare_2017}
C.~Audet and W.~Hare.
\newblock {\em Derivative-{F}ree and {B}lackbox {O}ptimization}.
\newblock Springer Series in Operations Research and Financial Engineering.
  Springer International Publishing, 2017.

\bibitem{EBergou_EGorbunov_PRichtarik_2020}
E.~Bergou, E.~Gorbunov, and P.~Richt\'{a}rik.
\newblock Stochastic three points method for unconstrained smooth minimization.
\newblock {\em SIAM J. Optim.}, 30:2726--2749, 2020.

\bibitem{CCartis_JFowkes_ZShao_2022}
C.~Cartis, J.~Fowkes, and Z.~Shao.
\newblock Randomised subspace methods for non-convex optimization, with
  applications to nonlinear least-squares.
\newblock arXiv::2211.09873, 2022.

\bibitem{CCartis_LRoberts_2023}
C.~Cartis and L.~Roberts.
\newblock Scalable subspace methods for derivative-free nonlinear least-squares
  optimization.
\newblock {\em Math. Program.}, 199:461--524, 2023.

\bibitem{YChikuse_2003}
Y.~Chikuse.
\newblock {\em Statistics on Special Manifolds}.
\newblock Lecture Notes in Statistics. Springer, New York, 2003.

\bibitem{ARConn_KScheinberg_LNVicente_2009b}
A.~R. {Conn}, K.~Scheinberg, and L.~N. {Vicente}.
\newblock {\em Introduction to Derivative-Free Optimization}.
\newblock MPS-SIAM Series on Optimization. Society for Industrial and Applied
  Mathematics, Philadelphia, 2009.

\bibitem{MADinizEhrhardt_JMMArtinez_MRaydan_2008}
M.~A. {Diniz-Ehrhardt}, J.~M. {Mart\'{i}nez}, and M.~Raydan.
\newblock A derivative-free nonmonotone line-search technique for unconstrained
  optimization.
\newblock {\em J. Comput. Appl. Math.}, 219:383--397, 2008.

\bibitem{NIST_DLMF}
{\it NIST Digital Library of Mathematical Functions}.
\newblock \url{http://dlmf.nist.gov/}, Release 1.1.8 of 2022-12-15.
\newblock F.~W.~J. Olver, A.~B. {Olde Daalhuis}, D.~W. Lozier, B.~I. Schneider,
  R.~F. Boisvert, C.~W. Clark, B.~R. Miller, B.~V. Saunders, H.~S. Cohl, and
  M.~A. McClain, eds.

\bibitem{JCDuchi_MIJordan_MJWainwright_AWibisono_2015}
J.~C. {Duchi}, M.~I. {Jordan}, M.~J. {Wainwright}, and A.~Wibisono.
\newblock Optimal rates for zero-order convex optimization: the power of two
  function evaluations.
\newblock {\em IEEE Trans. Inform. Theory}, 61:2788--2806, 2015.

\bibitem{KJDzahini_SMWild_2022}
K.~J. {Dzahini} and S.~M. {Wild}.
\newblock Stochastic trust-region algorithm in random subspaces with
  convergence and expected complexity analyses.
\newblock arXiv:2207.06452, 2022.

\bibitem{AEdelman_TAArias_STSmith_1998}
A.~Edelman, T.~A. {Arias}, and S.~T. {Smith}.
\newblock The geometry of algorithms with orthogonality constraints.
\newblock {\em SIAM J. Matrix Anal. Appl.}, 20:303--353, 1998.

\bibitem{SGratton_CWRoyer_LNVicente_ZZhang_2015}
S.~Gratton, C.~W. {Royer}, L.~N. {Vicente}, and Z.~Zhang.
\newblock Direct search based on probabilistic descent.
\newblock {\em SIAM J. Optim.}, 25:1515--1541, 2015.

\bibitem{Mathematica}
Wolfram~Research{,} Inc.
\newblock Mathematica, {V}ersion 13.2.
\newblock Champaign, IL, 2022.

\bibitem{DKershaw_1983}
D.~Kershaw.
\newblock Some extensions of {W}.~{G}autschi's inequalities for the gamma
  function.
\newblock {\em Mathematics of Computation}, 41:607--611, 1983.

\bibitem{TGKolda_RMLewis_VTorczon_2003}
T.~G. {Kolda}, R.~M. {Lewis}, and V.~Torczon.
\newblock Optimization by direct search: {N}ew perspectives on some classical
  and modern methods.
\newblock {\em SIAM Rev.}, 45:385--482, 2003.

\bibitem{DKozak_SBecker_ADoostan_LTenorio_2021}
D.~Kozak, S.~Becker, A.~Doostan, and L.~Tenorio.
\newblock A stochastic subspace approach to gradient-free optimization in high
  dimensions.
\newblock {\em Comput. Optim. Appl.}, 79:339--368, 2021.

\bibitem{DKozak_CMolinari_LRosasco_LTenorio_SVilla_2023}
D.~Kozak, C.~Molinari, L.~Rosasco, L.~Tenorio, and S.~Villa.
\newblock Zeroth-order optimization with orthogonal random directions.
\newblock {\em Math. Program.}, 199:1179--1219, 2023.

\bibitem{JLarson_MMenickelly_SMWild_2019}
J.~Larson, M.~Menickelly, and S.~M. {Wild}.
\newblock Derivative-free optimization methods.
\newblock {\em Acta Numer.}, 28:287--404, 2019.

\bibitem{MMenickelly_2023}
M.~Menickelly.
\newblock Avoiding geometry improvement in derivative-free model-based methods
  via randomization.
\newblock arXiv:2305.17336, 2023.

\bibitem{VMilman_GSchectman_1986}
V.~D. {Milman} and G.~Schechtman.
\newblock {\em Asymptotic Theory of Finite Dimensional Normed Spaces}.
\newblock Lecture Notes in Mathematics. Springer Berlin, Heidelberg, 1986.

\bibitem{YuNesterov_VSpokoiny_2017}
Yu. Nesterov and V.~Spokoiny.
\newblock Random gradient-free minimization of convex functions.
\newblock {\em Found. Comput. Math.}, 17:527--566, 2017.

\bibitem{LRoberts_CWRoyer_2023}
L.~Roberts and C.~W. {Royer}.
\newblock Direct search based on probabilistic descent in reduced subspaces.
\newblock {\em SIAM J. Optim.}, 2023 (To appear).

\bibitem{ZShao_2022}
Z.~Shao.
\newblock {\em On Random Embeddings and their Applications to Optimization}.
\newblock PhD thesis, University of Oxford, 2022.

\end{thebibliography}

\end{document}